\documentclass[12pt]{article}
\usepackage{mathrsfs}
\usepackage{amsfonts}
\usepackage[all]{xy}
\usepackage{amsmath,amssymb}
\usepackage{amscd}

\openup 5pt \pagestyle{plain} \oddsidemargin -10pt \evensidemargin
-10pt \topmargin -35pt \textwidth 6.45truein \textheight 9.3truein
\parskip .055 truein
\baselineskip 5.5pt \lineskip 5.5pt \numberwithin{equation}{section}

\def\qed{\hfill$\Box$\par}

\def\qed{\ \ \ifhmode\unskip\nobreak\fi\ifmmode\ifinner
         \else\hskip5pt\fi\fi
 \hbox{\hskip5pt\vrule width4pt height6pt depth1.5pt\hskip 1 pt}}

\def\cl{\centerline}

\newtheorem{theo}{Theorem}[section]
\newtheorem{lemm}[theo]{Lemma}

\newtheorem{defi}[theo]{Definition}
\newtheorem{coro}[theo]{Corollary}
\newtheorem{prop}[theo]{Proposition}

\begin{document}
\cl{\large\bf Whittaker modules for the derivation Lie algebra }
\cl{{\large\bf of
torus with two variables}
\noindent\footnote{\footnotesize Supported by the National Natural Science Foundation of
China (No. 11271165), the Youth Foundation of National Natural Science
Foundation of China (No. 11101350, 11302052) and the natural Science Foundation of Fujian Province (No. 2010J05001)}} \vspace{16pt}
 \cl{Haifeng  Lian}
 \cl{\small Department of Mathematics, }
 \cl{\small Fujian Agriculture and
Forestry University, Fuzhou 350002,
China}
\cl{Email:
hlian@fafu.edu.cn}
 \vspace{16pt}
 \cl{Xiufu Zhang}
 \cl{\small School of Mathematics and Statistics,}\cl{ \small Jiangsu Normal
University, Xuzhou 221116, China }
\cl{Email: xfzhang@jsnu.edu.cn}

\numberwithin{equation}{section}

\begin{abstract}
Let $\mathcal{L}$ be the derivation
Lie algebra of  ${\mathbb C}[t_1^{\pm 1},t_2^{\pm 1}]$. Given a
triangle decomposition
 $\mathcal{L} =\mathcal{L}^{+}\oplus\mathfrak{h}\oplus\mathcal{L}^{-}$, we define a
nonsingular Lie algebra homomorphism
$\psi:\mathcal{L}^{+}\rightarrow\mathbb{C}$ and the universal
Whittaker $\mathcal{L}$-module $W_{\psi}$ of type $\psi$. We obtain
all  Whittaker vectors and submodules
 of $W_{\psi}$, and  all simple Whittaker $\mathcal{L}$-modules of type
$\psi$.
 \vspace{2mm}\\{\bf 2000 Mathematics Subject
Classification:} 17B10, 17B65, 17B68, 17B81\vspace{2mm}
\\ {\bf Keywords:}  Whittaker vector, Whittaker
module, simple module, Derivation Lie algebra
\end{abstract}

\vskip 3mm\noindent{\section{Introduction}}

Throughout the paper, the symbols ${\mathbb C}$, $\mathbb{N}$,
$\mathbb{Z}$ and $\mathbb{Z}_+$
 represent for the complex field, the set of
non-negative integers, the set of integers, and the set of positive
integers, respectively. Denote
$\underline{m}=\{1,\cdots,m\}$ for $m\geq 1$ and
$\mathbb{Z}^n=\{(a_1,\cdots,a_n)|a_1,\cdots,a_n\in\mathbb{Z}\}$ for
$n\geq 2$.
The rings and Lie algebras in our paper are all over $\mathbb{C}.$

Let $\mathcal{A}_n={\mathbb C}[t_1^{\pm
1},\cdots,t_n^{\pm 1}]$ be the Laurent polynomial ring with
$n$ variables.
Let $\textrm{Der}\mathcal{A}_n$ be the derivation Lie algebra of
$\mathcal{A}_n$. When $n=1,$ $\textrm{Der}\mathcal{A}_1$ is just the Witt algebra.
The central extension of Witt algebra is called the Virasoro algebra, it is widely used in conformal field theory and string theory and
plays important roles, the representation theories of Virasoro algebra are widely studied(see \cite{M,OW, MZ1, MZ2} and the references therein). For $n\geq2,$ $\textrm{Der}\mathcal{A}_n$ is a natural generalization of Witt algebra.
It is  proved that $\textrm{Der}\mathcal{A}_n (n\geq2)$ has no nontrivial central extension. The Virasoro-like Lie algebra is a central extension of a subalgebra of $\textrm{Der}\mathcal{A}_2,$ which has also been widely studied (see \cite{JM, LT2, KP, WZh, ZZ})
The quantum torus contains the Laurent polynomial ring $\mathcal{A}_n$ as its special case.
The derivation Lie algebras of quantum torus
have many important application in the study of the representation theory of Lie algebras (see \cite{LT1} and the references therein).

The purpose of this paper is to study explicitly the Whittaker
modules for
 $\textrm{Der}\mathcal{A}_2$. Our aim is to give the classification of the simple Whittaker modules over $\textrm{Der}\mathcal{A}_2.$

Whittaker modules were first discovered for sl$_2(\mathbb{C})$ by Arnal and
 Pinczon (see \cite{AP}). The versions of Whittaker modules for
 finite-dimensional complex semisimple Lie algebras were
generalized by  Kostant in \cite{Ko}. Since this class of modules
tied up with Whittaker equations in number theory, Kostant called
them Whittaker modules. He proved that for a given
 complex semisimple Lie algebra $\cal G$,
there is a  bijection between its Whittaker modules and the ideals
in the center of the universal enveloping algebra $U(\mathcal{G})$.
The prominent role played by Whittaker modules is illustrated by
Block (see \cite{B}). In his paper, Block  proved that any
irreducible sl$_2(\mathbb{C})$-module belongs to the following three
families of modules: highest (or lowest) weight modules, Whittaker
modules, and
 modules obtained by localization.

Since the definition of  Whittaker modules is closely tied to
triangular decomposition of a finite-dimensional complex semisimple
Lie algebra $\cal G$, it is natural to consider Whittaker modules
for other algebras with a triangular decomposition such as the
generalized Weyl algebras,  Heisenberg algebras, affine Lie
algebras,  Virasoro algebras, Lie algebra of Block type and the
Schr\"{o}inger-Witt algebra, etc.(see \cite{BO, Ch, OW, WZ, ZTL}).

The paper is organized as follows. In Section 2, we construct a
universal Whittaker module for $\textrm{Der}\mathcal{A}_n$. In section 3, we
describe the Whittaker vectors and find all the simple nonsingular Whittaker
modules. In section 4, the simplicity of nonsingular
Whittaker modules is completely determined.

\noindent{\section{Preliminaries}}

Let $\mathcal{L}=\textrm{Der}\mathcal{A}_n$ be the
 derivation Lie algebra of
  $\mathcal{A}_n={\mathbb C}[t_1^{\pm
1},\cdots,t_n^{\pm 1}]$ with $n\geq 2$. For
$\alpha=(\alpha_1,\cdots,\alpha_n)\in {\mathbb{Z}}^{n}$ and
$i\in\underline{n}$, set
 $$d_{i}(\alpha)=t^{\alpha}(t_{i}\frac{\partial}{\partial
 t_{i}}),\quad
  \mathcal{L}_{\alpha}=\oplus_{i\in \underline{n}}\mathbb{C}d_{i}(\alpha),$$  where $t^\alpha=t_1^{\alpha_1}\cdots t_n^{\alpha_n}$.
 Then $\mathcal{L}=\oplus_{\alpha\in
{\mathbb{Z}}^{n}} \mathcal{L}_{\alpha}$ is a
${\mathbb{Z}}^{n}$-graded Lie algebra with the following
multiplication:
$$[d_i(\alpha), d_j(\beta)]= \beta_i d_j(\alpha+\beta)
-\alpha_j d_i(\alpha+\beta).$$

For
$\alpha=(\alpha_1,\cdots,\alpha_n),\beta=(\beta_1,\cdots,\beta_n)\in
\mathbb{Z}^{n}$, we denote $\beta<\alpha$ (or $\alpha>\beta$)  if
the first nonzero elements of
$\alpha_1-\beta_1,\cdots,\alpha_n-\beta_n$ is positive; denote
$\beta\leq\alpha$ (or $\alpha\geq\beta$) if $\beta<\alpha$ or if
$\beta=\alpha$. Clearly, $({\mathbb{Z}}^{n},\leq)$ is a totally
ordered set.

Set ${\mathbb{Z}}^{n}_+=\{ \alpha \in {\mathbb{Z}}^{n}|~
\alpha>\mathbf{0}\}$, ${\mathbb{Z}}^{n}_-=\{ \alpha \in
{\mathbb{Z}}^{n}| \alpha<\mathbf{0}\},$  where
$\mathbf{0}=(0,\cdots,0)\in \mathbb{Z}^{n}$. $\mathcal{L}$ has the
following triangular decomposition
$$\mathcal{L}=\mathcal{L}^{+}\oplus\mathfrak{h}\oplus\mathcal{L}^{-},$$
where $\mathcal{L}^{+}=\oplus_{\alpha\in{\mathbb{Z}}^{n}_+}{\cal
L}_\alpha$ and
$\mathcal{L}^{-}=\oplus_{\alpha\in{\mathbb{Z}}^{n}_-}{\cal
L}_\alpha$ are subalgebras of $\mathcal{L}$, and
$\mathfrak{h}=\mathcal{L}_{\mathbf{0}}$ is a cartan subalgebra of
$\mathcal{L}$.

For
$\alpha=(\alpha_1,\cdots,\alpha_n),\beta=(\beta_1,\cdots,\beta_n)\in
\mathbb{Z}^{n}$, define
$$\alpha\pm\beta=(\alpha_1\pm\beta_1,\cdots,\alpha_n\pm\beta_n).$$ Set
$\varepsilon_i=(\varepsilon_{i1},\cdots,\varepsilon_{in})$ where
$\varepsilon_{ij}=\delta_{ij}$, that is $\varepsilon_{ii}=1$ and
$\varepsilon_{ij}=0$ for $i\neq j$.

\begin{prop} Let $\mathcal{L}^{+}=
\oplus_{i\in\underline{n}}\mathbb{C}d_i(\varepsilon_n)\oplus\mathbb{C}d_n(2\varepsilon_n)\oplus[\mathcal{L}^{+},\mathcal{L}^{+}]$.
\end{prop}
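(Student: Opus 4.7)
The plan is to exploit the $\Z^{n}$-grading $\mathcal{L}^{+}=\oplus_{\gamma>\mathbf{0}}\mathcal{L}_{\gamma}$ and reduce the statement to a degree-by-degree analysis. Since $[\mathcal{L}_{\alpha},\mathcal{L}_{\beta}]\subset\mathcal{L}_{\alpha+\beta}$, the subspace $[\mathcal{L}^{+},\mathcal{L}^{+}]$ is itself $\Z^{n}$-graded, and $\mathcal{L}_{\gamma}\cap[\mathcal{L}^{+},\mathcal{L}^{+}]$ can be nonzero only when $\gamma>\mathbf{0}$ admits a decomposition $\gamma=\alpha+\beta$ with $\alpha,\beta>\mathbf{0}$. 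A short check against the lexicographic order shows that this decomposition fails exactly for $\gamma=\varepsilon_{n}$ and is forced to be $\varepsilon_{n}+\varepsilon_{n}$ when $\gamma=2\varepsilon_{n}$; in every remaining case either $(\gamma-\varepsilon_{n})+\varepsilon_{n}$ (when the first nonzero coordinate of $\gamma$ occurs at some $k<n$) or $(m-1)\varepsilon_{n}+\varepsilon_{n}$ (when $\gamma=m\varepsilon_{n}$ with $m\geq 3$) is a valid splitting.

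For $\gamma$ whose first nonzero coordinate $\gamma_{k}$ satisfies $k<n$, I would apply the bracket formula to obtain
$$[d_{k}(\varepsilon_{n}),d_{j}(\gamma-\varepsilon_{n})]=\gamma_{k}\,d_{j}(\gamma)-\delta_{nj}\,d_{k}(\gamma).$$
Since $\gamma_{k}\neq 0$, this immediately places $d_{j}(\gamma)$ in $[\mathcal{L}^{+},\mathcal{L}^{+}]$ for every $j\neq n$, and then $d_{n}(\gamma)$ follows from the $j=n$ instance using the already-obtained $d_{k}(\gamma)$. For $\gamma=m\varepsilon_{n}$ with $m\geq 3$, the analogous computation
$$[d_{n}(\varepsilon_{n}),d_{j}((m-1)\varepsilon_{n})]=(m-1)d_{j}(m\varepsilon_{n})-\delta_{nj}\,d_{n}(m\varepsilon_{n})$$
yields $d_{j}(m\varepsilon_{n})$ for $j\neq n$ at once, and for $j=n$ the total coefficient $(m-1)-1=m-2$ is nonzero, so $\mathcal{L}_{m\varepsilon_{n}}\subset[\mathcal{L}^{+},\mathcal{L}^{+}]$.

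The two exceptional degrees are then dispatched directly. For $\gamma=2\varepsilon_{n}$ the only contributing brackets are $[d_{i}(\varepsilon_{n}),d_{j}(\varepsilon_{n})]=\delta_{in}\,d_{j}(2\varepsilon_{n})-\delta_{jn}\,d_{i}(2\varepsilon_{n})$, whose span is $\oplus_{j\neq n}\mathbb{C}d_{j}(2\varepsilon_{n})$, so in this degree $[\mathcal{L}^{+},\mathcal{L}^{+}]$ misses exactly the line $\mathbb{C}d_{n}(2\varepsilon_{n})$. For $\gamma=\varepsilon_{n}$ the intersection $\mathcal{L}_{\varepsilon_{n}}\cap[\mathcal{L}^{+},\mathcal{L}^{+}]=0$, so $\mathcal{L}_{\varepsilon_{n}}$ contributes the full $\oplus_{i\in\underline{n}}\mathbb{C}d_{i}(\varepsilon_{n})$ as a complement. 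Assembling these pieces degree by degree gives both the spanning property and the directness of the sum asserted in the proposition. The only mildly delicate point is the $j=n$ step in the $k<n$ case, where one must first secure $d_{k}(\gamma)$ before isolating $d_{n}(\gamma)$; everything else is routine bookkeeping driven by the bracket formula.
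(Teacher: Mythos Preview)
Your proof is correct and follows essentially the same route as the paper: both arguments identify $[\mathcal{L}^{+},\mathcal{L}^{+}]$ degree by degree, handling $\mathcal{L}_{2\varepsilon_{n}}$ and $\mathcal{L}_{\varepsilon_{n}}$ separately and showing $\mathcal{L}_{\gamma}\subset[\mathcal{L}^{+},\mathcal{L}^{+}]$ for all remaining $\gamma$ via brackets with $d_{i}(\varepsilon_{n})$. The only cosmetic difference is the case split---the paper branches on whether $\alpha_{n}>2$, while you branch on whether the first nonzero coordinate of $\gamma$ lies before position $n$---and your two-step treatment of $d_{n}(\gamma)$ (first securing $d_{k}(\gamma)$) is packaged in the paper as a single combined formula, but the computations are the same.
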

\begin{proof} Since
$\mathcal{L}^{+}=\oplus_{\alpha\in{\mathbb{Z}}^{n}_+}{\cal
L}_\alpha$, we need to show that
$$[\mathcal{L}^{+},\mathcal{L}^{+}]=(\oplus_{i\in\underline{n-1}}\mathbb{C}d_i(2\varepsilon_n))\oplus(\oplus_{\alpha>2\varepsilon_n}{\cal
L}_\alpha).$$ For $\alpha=(\alpha_1,\cdots,\alpha_n)>2\varepsilon_n$
and $i\in \underline{n}$, if $\alpha_n>2$, then we have
$$d_i(\alpha)=\frac{\alpha_i}{(\alpha_n-2)}[d_n(\varepsilon_n),d_n(\alpha-\varepsilon_n)]
-[d_i(\varepsilon_n),d_n(\alpha-\varepsilon_n)]\in
[\mathcal{L}^{+},\mathcal{L}^{+}];$$ if $\alpha_n\leq2$, then there
is $i_0\in \underline{n-1}$ such that $\alpha_{i_0}>0$, thus we have
$$d_i(\alpha)=\frac{1}{\alpha_{i_0}}[d_{i_0}(\varepsilon_n),d_i(\alpha-\varepsilon_n)]
+\frac{1}{\alpha_{i_0}^2}\delta_{in}[d_{i_0}(\varepsilon_n),d_{i_0}(\alpha-\varepsilon_n)]\in
[\mathcal{L}^{+},\mathcal{L}^{+}].$$ Therefore, we have
$\oplus_{\alpha>2\varepsilon_n}{\cal L}_\alpha\subseteq
[\mathcal{L}^{+},\mathcal{L}^{+}]$. Since $[{\cal L}_{\alpha},{\cal
L}_{\beta}]\subseteq {\cal L}_{\alpha+\beta}$ and
$$[\mathcal{L}_{\varepsilon_{n}},
\mathcal{L}_{\varepsilon_n}]=\sum_{i,j\in\underline{n}}
 \mathbb{C}[d_i(\varepsilon_n), d_j(\varepsilon_n)]=\sum_{i\in\underline{n-1}}
 \mathbb{C}[d_n(\varepsilon_n), d_i(\varepsilon_n)]=\oplus_{i\in\underline{n-1}}\mathbb{C}D_i(2\varepsilon_n),$$
 we have  $[\mathcal{L}^{+},\mathcal{L}^{+}]=(\oplus_{i\in\underline{n-1}}d_i(2\varepsilon_n))\oplus(\oplus_{\alpha>2\varepsilon_n}{\cal
L}_\alpha)$, as required.
\end{proof}

Let $\psi:{\cal L}^{+}\rightarrow\mathbb{C}$ be a
homomorphism of Lie algebras. Set $\Omega=\{d_1(\varepsilon_n),
\cdots, d_n(\varepsilon_n)$, $d_n(2\varepsilon_n)\}$. Though
$\mathcal{L}^+$ can not be
 degenerated by $\Omega$, following from proposition 2.1, $\psi$ is
uniquely determined by $\Omega$.

\begin{defi} Suppose $\psi:\mathcal{L}^{+}\rightarrow\mathbb{C}$ is a
homomorphism of Lie algebras. If $\psi(x)\neq 0$ for all
$x\in\Omega$, we say $\psi$ is nonsingular; otherwise, we say
$\psi$ is singular.
\end{defi}

\begin{defi}  Suppose   $V$ is an  $\mathcal{L}$-module and
$\psi:\mathcal{L}^{+}\rightarrow\mathbb{C}$ is a homomorphism of Lie
algebras. A nonzero vector $v$ of $V$ is called  a Whittaker vector
of type $\psi$, if $xv=\psi(x)v$ for every $x\in \mathcal{L}^{+}$.
$V$ is called a  Whittaker module of type $\psi$ if there is a
Whittaker vector $w$ which generates $V$. In this case, we call
 $w$   a cyclic Whittaker vector.  Moreover, if any Whittaker $\mathcal{L}$-module of type $\psi$
is a quotient of $V$, then we call $V$ is universal.
\end{defi}

Suppose $\psi:\mathcal{L}^{+}\rightarrow\mathbb{C}$ is homomorphism of Lie
algebras. For $x\in \mathcal{L}^{+}$, $a\in \mathbb{C}$,  define
$x\cdot a=\psi(x)a$. Then $\mathbb{C}$ is an
$\mathcal{L}^{+}$-module, and is denoted by $\mathbb{C}_\psi$. Let
$W_{\psi}$ be the $\mathcal{L}$-module induced by $\mathbb{C}_\psi$,
that is
$$W_{\psi}=U(\mathcal{L})\otimes_{U(\mathcal{L}^{+})}\mathbb{C}_{\psi}.$$
Clearly,  $W_{\psi}$ is a universal Whittaker module with cyclic
Whittaker vector $w=1\otimes1$, and $W_{\psi}$ is isomorphic to
$U(\mathfrak{b}^{-})$ as $U(\mathfrak{b}^{-})$-module, where
$\mathfrak{b}^{-}=\mathcal{L}^{-}\oplus\mathfrak{h}$.

\noindent{\section{Whittaker vectors for Whittaker modules of nonsingular type}}

In what follows, we let $n=2$,
$\mathcal{L}=\textrm{Der}\mathcal{A}_2$,
$\psi:\mathcal{L}^{+}\rightarrow\mathbb{C}$ be a nonsingular
homomorphism of Lie algebras,
$W_{\psi}=U(\mathcal{L})\otimes_{U(\mathcal{L}^{+})}\mathbb{C}_{\psi}$
be the universal Whittaker $\mathcal{L}$-module of type $\psi$ with
cyclic Whittaker vector $w=1\otimes1$, $\Omega=\{d_1(\varepsilon_2),
d_2(\varepsilon_2)$, $d_2(2\varepsilon_2)\}$.

To describe the bases of the Whittaker module $W_{\psi}$, we need
the following notations.

\begin{defi}  Let $(\Lambda,\leq)$ be a totally ordered set. A partition of $\Lambda$ with length $r$ is a non-decreasing sequence of $r$
  elements of $\Lambda$:
  $$\lambda=(\lambda^1,\cdots,\lambda^r),\quad \lambda^1\leq\cdots\leq\lambda^r.$$
\end{defi}

Denote by $\mathcal{P}(\Lambda)$ the set of all partitions
with
  finite length. For
  $\lambda=(\lambda^1,\cdots,\lambda^r)\in\mathcal{P}(\Lambda)$,
  denote $l(\lambda)=r$, and for $\alpha\in\Lambda$, let
  $\lambda(\alpha)$ denote the number of  times $\alpha$ appears in
  the partition. Clearly, any partition $\lambda$ is completely
  determined by the values $\lambda(\alpha)$, $\alpha\in\Lambda$.
  If all $\lambda(\alpha)=0$, call $\lambda$ the null partition and
  denote $\lambda=\bar{0}$.
  Note that, $\bar{0}$ is the only partition of with length $0$. We
  consider $\bar{0}$ an element of $\mathcal{P}(\Lambda)$. For $\lambda\in\mathcal{P}(\Lambda)$, we
also write $\lambda=\{
\alpha^{\lambda(\alpha)}\}_{\alpha\in{\Lambda}}$.

Let $\mathcal{P}=\mathcal{P}({\mathbb{Z}}^{n}_+)$,
$\widetilde{\mathcal{P}}=\mathcal{P}\times \mathcal{P}\times
\mathbb{N}$. For
$\lambda=(\lambda^1,\cdots,\lambda^r)\in\mathcal{P}$ and $i=1,2$,
set
$$\begin{array}{l} \vspace{1mm} |\lambda|=|\lambda^1|+\cdots+|\lambda^r|,\quad|\bar{0}|=\mathbf{0},\\
 \vspace{1mm}x_{i,\lambda}=d_i(-\lambda^1)\cdots
d_i(-\lambda^r),\quad x_{i,\bar{0}}=1.
\end{array}$$
For $(\lambda,\mu,k)\in\widetilde{\mathcal{P}}$,  $
\alpha\in{\mathbb{Z}}^{2}_+ $, we set
   $$\begin{array}{l} \vspace{1mm} |(\lambda,\mu,k)|=
   |\lambda|+|\mu|,\quad\quad\quad  x_{\lambda,\mu,k}=
   x_{1,\lambda}x_{2,\mu}d_2^k(\mathbf{0}),\\ \vspace{1mm}
S_\mu=\{ \gamma\in{\mathbb{Z}}^{2}_+ | \mu(\gamma)> 0 \},\quad\quad
S_{\lambda,\mu}=\{
\gamma\in{\mathbb{Z}}^{2}_+\ |\ \lambda(\gamma)\neq\mu(\gamma)\},\\
\vspace{1mm} S^+_\mu=\{
\gamma=(\gamma_1,\gamma_2)\in{\mathbb{Z}}^{2}_+ | \mu(\gamma)> 0\
\textrm{and}\ \gamma_1>0 \},\\
\vspace{1mm}S^+_{\lambda,\mu}=\{
\gamma=(\gamma_1,\gamma_2)\in{\mathbb{Z}}^{2}_+\ |
\lambda(\gamma)\neq\mu(\gamma)\ \textrm{and} \ \gamma_1>0\},\\
\mu_\alpha=\{ \gamma^{k_\gamma}\}_{\gamma\in{\mathbb{Z}}^{2}_+},\
\textrm{where} \ k_\alpha=\mu(\alpha)-1 \textrm{ and }
   k_\gamma=\mu(\gamma) \textrm{ for } \gamma\neq\alpha.\end{array}$$
 We denote $x_{i,\mu_\alpha}=0$, if $\mu_\alpha\not\in\mathcal{P}$.

Set $z=d_{1}(\mathbf{0})$. Following from the
Poincar\'{e}-Birkhoff-Witt theorem, the set $$ \{
x_{\lambda,\mu,k}z^rw\ |\ (\lambda,\mu,k)\in
\widetilde{\mathcal{P}}, \ r\in \mathbb{N} \}$$ forms a basis of
$W_{\psi}$.

\begin{theo} Suppose $\psi:\mathcal{L}^{+}\rightarrow\mathbb{C}$ is
 a nonsingular
homomorphism of Lie algebras. Let  $W_{\psi}$ be a universal
Whittaker $\mathcal{L}$-module with cyclic Whittaker vector  $w$.
For $0\neq u\in U(\mathfrak{b}^{-})$,  $uw$ is a  Whittaker vector
if and only if  $u\in \mathbb{C}[z]$.
\end{theo}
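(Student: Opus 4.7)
For the \emph{sufficiency} direction I would show that $z^rw$ is Whittaker for each $r\geq 0$ and extend by linearity. For a homogeneous $X=d_i(\alpha)\in\mathcal{L}_\alpha^+$ the bracket formula gives $[z,X]=\alpha_1 X$, hence $Xz^r=(z-\alpha_1)^r X$ in $U(\mathcal{L})$, and therefore $Xz^rw=\psi(X)(z-\alpha_1)^rw$. The key observation is that, by Proposition~2.1, $\psi$ factors through $\mathcal{L}^+/[\mathcal{L}^+,\mathcal{L}^+]$, whose basis $\Omega$ consists of elements with first coordinate $\alpha_1=0$. Thus $\psi(X)\neq 0$ forces $\alpha_1=0$ (making both sides agree), while $\psi(X)=0$ makes both sides zero. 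Splitting a general $X\in\mathcal{L}^+$ into its PBW components $d_i(\alpha)$ and applying this dichotomy termwise then gives $Xp(z)w=\psi(X)p(z)w$ for any $p(z)\in\mathbb{C}[z]$.

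For the \emph{necessity} direction, write $u=\sum c_{\lambda,\mu,k,r}\,x_{\lambda,\mu,k}z^r$ in the PBW basis and rewrite the Whittaker condition as $[X,u]w=0$ for every $X\in\mathcal{L}^+$. I would first dispose of the base case $\lambda=\mu=\bar{0}$: here $u=\sum c_{k,r}\,d_2(\mathbf{0})^k z^r\in S(\mathfrak{h})$, and applying $X=d_2(\varepsilon_2)\in\Omega$ (with $\psi(X)\neq 0$ by nonsingularity) and using $[X,z]=0$ together with $[X,d_2(\mathbf{0})]=-X$ collapses the Whittaker condition to the identity
$$\sum_{k,r} c_{k,r}\bigl[(d_2(\mathbf{0})-1)^k-d_2(\mathbf{0})^k\bigr]z^r w=0.$$
Since the polynomials $(y-1)^k-y^k$ for $k\geq 1$ have strictly increasing degrees $k-1$, they are $\mathbb{C}$-linearly independent, forcing $c_{k,r}=0$ whenever $k\geq 1$; hence $u\in\mathbb{C}[z]$.

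To reduce the general case to this base, I would induct on the lex-maximum $N=(N_1,N_2)$ of $|\lambda|+|\mu|$ over surviving terms. Assuming $N>\mathbf{0}$, pick $\alpha\in\mathbb{Z}_+^2$ lex-larger than every partition entry $\nu$ appearing in $u$ and ``generic'' in the sense that $\alpha-\nu\notin\Omega$ for every such $\nu$; apply $X=d_j(\alpha)$ for a suitable $j$. Each atomic commutator $[X,d_i(-\nu)]$ then either lands in $\mathcal{L}^+$ with $\psi$-value zero (the generic case $\alpha\neq\nu$) or in $\mathfrak{h}$ as a scalar combination of $z$ and $d_2(\mathbf{0})$ (the special case $\alpha=\nu$, targeting a top-$N$ partition entry). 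Reading off the leading PBW stratum of the projection $\pi([X,u])$ into $U(\mathfrak{b}^-)$ should show that it is a nonzero linear expression in the top-$N$ coefficients, and the vanishing condition $[X,u]w=0$ then forces these coefficients to vanish, reducing $N$ and allowing the induction to proceed.

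The main obstacle is precisely this inductive step. Commuting $X$ through a full product $x_{\lambda,\mu,k}z^r$ cascades into iterated commutators, because each intermediate $\mathcal{L}^+$ factor produced must still be propagated past the remaining $\mathfrak{b}^-$ factors before reaching $w$, and one must guarantee that no accidental $\psi$-resonance $\alpha-\sum\nu_j\in\Omega$ occurs along the way. A double induction --- first on $N$ (lex) and then on the total length $\ell(\lambda)+\ell(\mu)+k$ of the top stratum --- seems the right organizational device, with the innermost stage reducing to essentially the same polynomial non-degeneracy argument used in the base case.
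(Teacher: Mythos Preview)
Your sufficiency argument is correct and matches the paper's, and your base case $u\in S(\mathfrak h)$ is also fine (it is a minor variant of the paper's Lemma~3.7, using $d_2(\varepsilon_2)$ in place of $d_2(2\varepsilon_2)$). The inductive step, however, has a genuine gap. Your choice of $\alpha$ is internally inconsistent: you require it to be lex-larger than \emph{every} partition entry $\nu$, yet then invoke the ``special case $\alpha=\nu$'' to produce an $\mathfrak h$-contribution. If $\alpha>\nu$ for all $\nu$ that case is vacuous; every atomic commutator $[X,d_i(-\nu)]$ then lies in $\mathcal L^+_{\alpha-\nu}$ with $\psi$-value zero by your genericity hypothesis, so all single-commutator contributions to $(X-\psi(X))uw$ vanish. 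What survives are iterated commutators of total degree $\alpha-\sum_j\nu_j$, and nothing in your setup forces these to yield a nonzero expression in the top-$N$ coefficients---the generic $\alpha$ that suppresses $\psi$-resonances is precisely what prevents you from extracting any information. The proposed double induction does not repair this: at each stage you still need some bracket to land where $\psi$ is nonzero, and genericity is designed to prevent exactly that.

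The paper's strategy is the opposite of generic. It takes $\alpha$ to be the \emph{minimum} element of the relevant support ($S_{\mu'}$, $S^+_{\lambda'}$, or $S_{\lambda'}$) and applies $D=d_i(\alpha+2\varepsilon_2)$ or $d_i(\alpha+\varepsilon_2)$, with $i$ and the shift chosen case by case, so that the single commutator $[D,d_j(-\alpha)]$ lands exactly in $\mathcal L_{2\varepsilon_2}$ or $\mathcal L_{\varepsilon_2}$---the only degrees where $\psi$ does not vanish. Controlling the lower-order noise then requires a total order $\preceq$ on $\widetilde{\mathcal P}$ strictly finer than lex on $|\lambda|+|\mu|$ (Definitions~3.3--3.4), refining first by $k$, then by $\mu$, then by $\lambda$ with a further split according to whether partition entries have positive first coordinate. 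Lemmas~3.7--3.11 handle the five resulting cases; the most delicate one (Lemma~3.11) must even descend to the stratum $\Lambda_{N-\varepsilon_2}$ when the top stratum $\Lambda_N$ alone gives no traction. This case analysis is exactly what your ``generic $\alpha$'' was meant to bypass, but some form of it appears unavoidable.
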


In  the proof, we need the following notations and lemmas.

\begin{defi} Suppose
$\lambda,\mu\in\mathcal{P}$, $\lambda\neq\mu$.

(1) We say $\lambda<\mu$ if $\lambda(\alpha)<\mu(\alpha)$,
where $\alpha$ is the minimum element in $S_{\lambda,\mu}$.

(2)  We say $\lambda\prec\mu$ if
$S^+_{\lambda,\mu}\neq\emptyset$ and $\lambda(\alpha)<\mu(\alpha)$,
where $\alpha$ is the minimum element in $S^+_{\lambda,\mu}$, or if
$S^+_{\lambda,\mu}=\emptyset$ and $\lambda<\mu$.
\end{defi}

\begin{defi} Suppose
Suppose
$(\lambda,\mu,k),(\lambda',\mu',k')\in\widetilde{\mathcal{P}}$. We
say $(\lambda,\mu,k)\prec(\lambda',\mu',k')$ if it satisfies one of
the following conditions.

(1) $|\lambda|+|\mu|<|\lambda'|+|\mu'|$;

(2) $|\lambda|+|\mu|=|\lambda'|+|\mu'|$, $k<k'$;

(3) $|\lambda|+|\mu|=|\lambda'|+|\mu'|$, $k=k'$, $\mu<\mu'$;

(4) $|\lambda|+|\mu|=|\lambda'|+|\mu'|$, $k=k'$, $\mu=\mu'$,
$\lambda\prec\lambda'$.
\end{defi}

We denote
$(\lambda,\mu,k)\preceq(\lambda',\mu',k')$ if
$(\lambda,\mu,k)=(\lambda',\mu',k')$ or
$(\lambda,\mu,k)\prec(\lambda',\mu',k')$. Then
$(\widetilde{\mathcal{P}},\preceq)$ is a totally ordered set with
minimum element $(\bar{0},\bar{0},0)$.

Set $W_{\psi}(\bar{0},\bar{0},0)=\{0\}$, and for
$(\bar{0},\bar{0},0)\prec
 (\lambda',\mu',k')$, set
$$W_{\psi}(\lambda',\mu',k')=\sum_{(\lambda,\mu,k)\prec(\lambda',\mu',k')}
 x_{\lambda,\mu,k}\mathbb{C}[z]w.$$
 For $0\neq v= \sum_{(\lambda,\mu,k)\in
\widetilde{\mathcal{P}}}
 x_{\lambda,\mu,k}f_{\lambda,\mu,k}(z)w$, where
$f_{\lambda,\mu,k}(z)\in \mathbb{C}[z]$, set
$$P(v)=\{(\lambda,\mu,k)|
f_{\lambda,\mu,k}(z)\neq0\},$$
$$\textrm{deg}(v)=\max\! _\preceq\{(\lambda,\mu,k)|(\lambda,\mu,k)\in
P(v)\}.$$

\begin{lemm} Suppose
$(\lambda,\mu,k)\in \widetilde{\mathcal{P}}$, $0\neq
f(z)\in\mathbb{C}[z]$, $D\in \Omega$. We have
$$D\cdot x_{\lambda,\mu,k}f(z)w \equiv  \psi(D)x_{\lambda,\mu,k}f(z)w  \quad (\textrm{mod}\ W_{\psi}(\lambda,\mu,k))\eqno(3.1)$$
Moreover, if $(\lambda',\mu',k')\in \widetilde{\mathcal{P}}$
and $(\lambda,\mu,k)\prec(\lambda',\mu',k')$, then
$$D\cdot x_{\lambda,\mu,k}f(z)w \equiv  0  \quad (\textrm{mod}\ W_{\psi}(\lambda',\mu',k'))\eqno(3.2)$$
\end{lemm}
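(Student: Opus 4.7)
The plan is to reduce (3.1) to the claim that $[D,x_{\lambda,\mu,k}f(z)]w\in W_\psi(\lambda,\mu,k)$. Since $D\in\mathcal L^+$ gives $Dw=\psi(D)w$, the identity
\[
D\cdot x_{\lambda,\mu,k}f(z)w=[D,x_{\lambda,\mu,k}f(z)]w+\psi(D)x_{\lambda,\mu,k}f(z)w
\]
makes the reduction immediate. Granted (3.1), the second assertion (3.2) follows automatically from the monotonicity $W_\psi(\lambda,\mu,k)\subseteq W_\psi(\lambda',\mu',k')$ for $(\lambda,\mu,k)\prec(\lambda',\mu',k')$, combined with the fact that $\psi(D)x_{\lambda,\mu,k}f(z)w$ is itself one of the spanning summands defining $W_\psi(\lambda',\mu',k')$.

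To control the commutator, I first observe that $[D,f(z)]=0$: each $D\in\Omega$ has the form $d_{i_0}(r\varepsilon_2)$ with $r\in\{1,2\}$, and because the first coordinate of $r\varepsilon_2$ is zero, both terms of $[d_{i_0}(r\varepsilon_2),d_1(\mathbf{0})]$ vanish by the bracket formula. Leibniz then expands $[D,x_{1,\lambda}x_{2,\mu}d_2^k(\mathbf{0})]$ into a sum of elementary brackets $[D,d_i(-\gamma)]$ (for $\gamma$ a part of $\lambda$ or $\mu$) and $[D,d_2(\mathbf{0})]$, each homogeneous in $\mathcal L$. Since every such $\gamma\geq\varepsilon_2$, the resulting weight $\varepsilon_2-\gamma$ or $2\varepsilon_2-\gamma$ is non-positive, with the single exception $D=d_2(2\varepsilon_2)$, $\gamma=\varepsilon_2$; one then runs a case analysis on these three regimes.

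In the strictly negative regime one inserts the new factor $d_?(-\eta)$ with $\eta\in\{\gamma-\varepsilon_2,\gamma-2\varepsilon_2\}$; reordering to PBW form---possibly transporting emergent $d_1$-factors from $[d_1(\varepsilon_2),d_2(-\mu^j)]$ into $x_{1,\lambda}$-position and generating further corrections via $[d_i(-\alpha),d_j(-\beta)]\in\mathcal L_{-\alpha-\beta}$---yields terms $x_{\lambda',\mu',k}g(z)w$ with $|\lambda'|+|\mu'|<|\lambda|+|\mu|$, hence strictly $\prec(\lambda,\mu,k)$ by clause (1) of Definition 3.3. In the zero-weight regime ($\gamma\in\{\varepsilon_2,2\varepsilon_2\}$) one obtains either $z$, which absorbs into the polynomial coefficient while the partition strictly decreases, or $d_2(\mathbf{0})$, which merges into $d_2^k(\mathbf{0})$ and strictly reduces $|\mu|$ by $\varepsilon_2$; additionally, $[D,d_2^k(\mathbf{0})]$ itself---computed from the eigenvalue relations $[d_2(\mathbf{0}),D]=cD$ with $c\in\{1,2\}$---equals $-D$ times a polynomial in $d_2(\mathbf{0})$ of degree $<k$, so after commuting $D$ rightward past $f(z)$ to act on $w$ as $\psi(D)$ one gets terms $x_{\lambda,\mu,j}f(z)w$ with $j<k$, again $\prec(\lambda,\mu,k)$ by clause (2). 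In the strictly positive regime one obtains a multiple of $d_?(\varepsilon_2)\in\mathcal L^+$; commuting it back through the remaining factors to $w$ produces $\psi(d_?(\varepsilon_2))\,x_{\lambda,\mu_{\varepsilon_2},k}f(z)w$ plus smaller corrections. The main obstacle I anticipate is the PBW bookkeeping, in particular the migration of $d_1$-factors from $x_{2,\mu}$-position into $x_{1,\lambda}$-position and respecting the asymmetric definition of $\prec$ in Definition 3.3, which uses $<$ on the $\mu$-slot but the finer $S^+$-filtered $\prec$ on the $\lambda$-slot; each reordering step must be shown to produce only triples strictly smaller in $\prec$, not merely of equal or smaller total weight.
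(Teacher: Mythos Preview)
Your approach is correct and coincides with the paper's: both reduce (3.1) to showing that the commutator $[D,x_{\lambda,\mu,k}f(z)]w$ lies in $W_\psi(\lambda,\mu,k)$, using $[D,z]=0$ and then Leibniz. The paper's proof is simply terser, recording the same chain of equalities and dropping the commutator pieces one at a time without spelling out your three-regime case analysis.

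One remark on your ``anticipated obstacle'': the delicate clauses (3) and (4) of the order $\prec$ on $\widetilde{\mathcal P}$ never come into play here, so the PBW bookkeeping is lighter than you fear. Every term produced by $[D,x_{1,\lambda}x_{2,\mu}]$, once any stray $\mathcal L^+$ factor (your ``positive regime'') has been commuted through to $w$ and replaced by a scalar $\psi(\,\cdot\,)$, is a homogeneous element of $U(\mathfrak b^-)$ of total $\mathbb Z^2$-degree $-(|\lambda|+|\mu|)+\deg D$. Since the $\mathbb Z^2$-grading on $U(\mathfrak b^-)$ is preserved under all further reorderings, each resulting PBW monomial $x_{\lambda'',\mu'',k''}g(z)w$ satisfies $|\lambda''|+|\mu''|=|\lambda|+|\mu|-\deg D<|\lambda|+|\mu|$, and clause (1) of Definition~3.4 alone forces $(\lambda'',\mu'',k'')\prec(\lambda,\mu,k)$, irrespective of $k''$ or of the individual shapes of $\lambda'',\mu''$. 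The only place clause (2) is needed is for the contribution $x_{1,\lambda}x_{2,\mu}[D,d_2^k(\mathbf 0)]f(z)w$, which you handle correctly. Thus there is no need to track migration of $d_1$-factors or the $S^+$-filtered order on the $\lambda$-slot; the paper's brevity is justified by exactly this weight argument. (Minor slip: your references to ``Definition~3.3'' for the order on $\widetilde{\mathcal P}$ should be to Definition~3.4.)
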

\begin{proof} For $(3.1)$, using $Dz=zD$, we have
$$\begin{array}{rl}\vspace{1mm}
&D\cdot x_{\lambda,\mu,k}f(z)w\\ \vspace{2mm}
\equiv &[D,x_{1,\lambda}x_{2,\mu}]d_2^k(\mathbf{0})f(z)w+x_{1,\lambda}x_{2,\mu}Dd_2^k(\mathbf{0})f(z)w\\
\equiv&x_{1,\lambda}x_{2,\mu}[D,d_2^k(\mathbf{0})]f(z)w
+x_{1,\lambda}x_{2,\mu}d_2^k(\mathbf{0})Df(z)w\\
\equiv&x_{1,\lambda}x_{2,\mu}d_2^k(\mathbf{0})f(z)Dw\\
\equiv& \psi(D)x_{1,\lambda}x_{2,\mu}d_2^k(\mathbf{0})f(z)w \quad
(\textrm{mod}\ W_{\psi}(\lambda,\mu,k))
\end{array},$$
as required. Following from $(3.1)$, we have $(3.2)$.
\end{proof}

\begin{lemm} Any Whittaker vector
of $W_{\psi}$ is of type $\psi$.
\end{lemm}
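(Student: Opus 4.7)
The plan is to unpack what ``a Whittaker vector'' means in the absence of a specified character: a nonzero $v \in W_\psi$ with $Dv = \chi(D)v$ for all $D \in \mathcal{L}^+$, where $\chi:\mathcal{L}^+\to\mathbb{C}$ is some Lie algebra homomorphism, and then to show that necessarily $\chi = \psi$. I would exploit the leading-term analysis set up by the ordering $\preceq$ on $\widetilde{\mathcal{P}}$ and by Lemma 3.5.

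First I would expand $v$ in the PBW basis as $v=\sum_{(\lambda,\mu,k)} x_{\lambda,\mu,k}f_{\lambda,\mu,k}(z)w$ with $f_{\lambda,\mu,k}(z)\in\mathbb{C}[z]$, and let $(\lambda_0,\mu_0,k_0)=\deg(v)$ be its $\preceq$-maximum index in $P(v)$. For any $D\in\Omega$, Lemma 3.5 gives
\[
D\cdot v \;\equiv\; \psi(D)\,x_{\lambda_0,\mu_0,k_0}f_{\lambda_0,\mu_0,k_0}(z)w \pmod{W_\psi(\lambda_0,\mu_0,k_0)},
\]
since (3.1) applies to the top term while (3.2) kills all strictly lower terms. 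On the other hand, the Whittaker hypothesis yields $Dv=\chi(D)v$, whose image modulo $W_\psi(\lambda_0,\mu_0,k_0)$ is $\chi(D)\,x_{\lambda_0,\mu_0,k_0}f_{\lambda_0,\mu_0,k_0}(z)w$. Since $f_{\lambda_0,\mu_0,k_0}(z)\neq 0$ and the monomials $x_{\lambda_0,\mu_0,k_0}z^r w$ are PBW-linearly independent, comparing coefficients forces $\chi(D)=\psi(D)$ for every $D\in\Omega$.

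To finish, I would promote this equality from $\Omega$ to all of $\mathcal{L}^+$ using Proposition 2.1: $\mathcal{L}^+ = \bigoplus_{i\in\underline{2}}\mathbb{C}d_i(\varepsilon_2)\oplus\mathbb{C}d_2(2\varepsilon_2)\oplus[\mathcal{L}^+,\mathcal{L}^+]$. Because both $\chi$ and $\psi$ are Lie algebra homomorphisms into the abelian Lie algebra $\mathbb{C}$, they vanish on $[\mathcal{L}^+,\mathcal{L}^+]$ and are therefore completely determined by their values on $\Omega$, so $\chi=\psi$ and $v$ is a Whittaker vector of type $\psi$.

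The main obstacle is purely bookkeeping: one has to be confident that ``leading term modulo $W_\psi(\lambda_0,\mu_0,k_0)$'' really isolates a single nonzero element of $\mathbb{C}[z]\cdot x_{\lambda_0,\mu_0,k_0}w$, so that cancelling it is legitimate. This is exactly what the definitions of $\prec$, $W_\psi(\lambda',\mu',k')$, and $\deg(v)$ are engineered to provide; once the leading-term identification is correctly read off from Lemma 3.5, the rest is a one-line comparison plus an appeal to Proposition 2.1.
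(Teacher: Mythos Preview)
Your proposal is correct and follows essentially the same route as the paper's own proof: both expand the putative Whittaker vector in the PBW basis, isolate the $\preceq$-leading term via Lemma~3.5, compare the resulting congruence with the eigenvalue equation to force $\chi(D)=\psi(D)$ on $\Omega$, and then invoke Proposition~2.1 to conclude $\chi=\psi$. The only difference is cosmetic---the paper writes the congruence as $Dw'\equiv\psi(D)w'$ rather than singling out the top monomial explicitly, and it leaves the final ``$\Omega$ determines $\psi$'' step implicit---but the logic is identical.
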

\begin{proof} Suppose $w'= \sum_{(\lambda,\mu,k)\in
\widetilde{\mathcal{P}}}
 x_{\lambda,\mu,k}f_{\lambda,\mu,k}(z)w$ is a  Whittaker vector
of $W_{\psi}$ of type $\psi'$, where
$\psi':\mathcal{L}^{+}\rightarrow\mathbb{C}$ is a Lie homomorphism,
  $f_{\lambda,\mu,k}(z)\in
\mathbb{C}[z]$. Let $(\lambda',\mu',k')=\deg (w')$. Using lemma 3.5,
for $D\in \Omega$, we have
$$\psi'(D)w'=Dw'\equiv \psi(D)w'\quad (\textrm{mod}\
W_{\psi}(\lambda',\mu',k'))$$ which follows $\psi^{'}(D)=\psi(D)$.
Therefore, we have $\psi'=\psi$.
\end{proof}

Suppose $0\neq u\in
  U(\mathfrak{b}^{-})$.  Following from the
Poincar\'{e}-Birkhoff-Witt theorem, $uw$ can be write as
$$uw=\sum_{(\lambda,\mu,k)\in \widetilde{\mathcal{P}}}
x_{\lambda,\mu,k}f_{\lambda,\mu,k}(z)w,\ \textrm{where }
f_{\lambda,\mu,k}(z)\in \mathbb{C}[z].$$ Let
$(\lambda',\mu',k')=\deg(uw)$, $N=|\lambda'|+|\mu'|$,
$$\Lambda_N:=\{ (\lambda,\mu,k)\in P(uw) |\
  |\lambda|+|\mu|=N \},$$
 $$\Lambda_{N-\varepsilon}:=\{  (\lambda,\mu,k)\in P(uw) |
  |\lambda|+|\mu|=N-\varepsilon \}.$$

\begin{lemm} Suppose $k'>0$, we have
$$\begin{array}{ll}\vspace{2mm} & d_2(2\varepsilon_2)\cdot uw-\psi(d_2(2\varepsilon_2))uw\\ \equiv &
-2k'\psi(d_2(2\varepsilon_2))x_{\lambda',\mu',k'-1}f_{\lambda',\mu',k'}(z)w
 \quad (\textrm{mod}\ W_{\psi}(\lambda',\mu',k'-1))\end{array}$$
\end{lemm}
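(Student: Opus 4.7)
The plan is to expand $uw=\sum_{(\lambda,\mu,k)} x_{\lambda,\mu,k}f_{\lambda,\mu,k}(z)w$ via PBW, apply the operator $d_2(2\varepsilon_2)-\psi(d_2(2\varepsilon_2))$ termwise, and show that the contribution from every tuple $(\lambda,\mu,k)\neq(\lambda',\mu',k')$ in $P(uw)$ is absorbed into $W_{\psi}(\lambda',\mu',k'-1)$, while the leading term produces exactly $-2k'\psi(d_2(2\varepsilon_2))x_{\lambda',\mu',k'-1}f_{\lambda',\mu',k'}(z)w$.

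The central local computation describes how $d_2(2\varepsilon_2)$ moves past $d_2^{k}(\mathbf{0})$. From $[d_2(\mathbf{0}),d_2(2\varepsilon_2)]=2\,d_2(2\varepsilon_2)$ one obtains by induction
$$d_2(2\varepsilon_2)\,d_2^{k}(\mathbf{0})=(d_2(\mathbf{0})-2)^{k}\,d_2(2\varepsilon_2).$$
Since $d_2(2\varepsilon_2)$ commutes with $z=d_1(\mathbf{0})$ and acts on $w$ as the scalar $\psi(d_2(2\varepsilon_2))$, the binomial expansion yields
$$d_2(2\varepsilon_2)\cdot d_2^{k}(\mathbf{0})f(z)w=\psi(d_2(2\varepsilon_2))f(z)\bigl(d_2^{k}(\mathbf{0})-2k\,d_2^{k-1}(\mathbf{0})+\cdots\bigr)w,$$
so after subtracting $\psi(d_2(2\varepsilon_2))$ times the original monomial the subleading piece is $-2k\psi(d_2(2\varepsilon_2))\,d_2^{k-1}(\mathbf{0})f(z)w$, the remainder being supported on strictly smaller powers of $d_2(\mathbf{0})$. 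For the $x_{1,\lambda}x_{2,\mu}$ part, the commutator $[d_2(2\varepsilon_2),x_{1,\lambda}x_{2,\mu}]$ is handled by the Leibniz rule: every $d_i(-\gamma)$ factor is replaced by $[d_2(2\varepsilon_2),d_i(-\gamma)]\in\mathcal{L}_{2\varepsilon_2-\gamma}$, and pushing the resulting $\mathcal{L}^{+}$-pieces through to act on $w$ as $\psi$-scalars produces a PBW-normal sum of monomials $x_{\bar\lambda,\bar\mu,\bar k}f(z)w$ with $|\bar\lambda|+|\bar\mu|<|\lambda|+|\mu|$ in the lex order on $\mathbb{Z}^{2}$.

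Putting the two ingredients together, for each $(\lambda,\mu,k)\in P(uw)$ the difference $(d_2(2\varepsilon_2)-\psi(d_2(2\varepsilon_2)))\cdot x_{\lambda,\mu,k}f_{\lambda,\mu,k}(z)w$ equals $-2k\psi(d_2(2\varepsilon_2))\,x_{\lambda,\mu,k-1}f_{\lambda,\mu,k}(z)w$ modulo monomials with strictly smaller $|\bar\lambda|+|\bar\mu|$ or strictly smaller $d_2(\mathbf{0})$-exponent. If $|\lambda|+|\mu|<N$, every piece automatically lies in $W_{\psi}(\lambda',\mu',k'-1)$. If $|\lambda|+|\mu|=N$ and $k<k'$, then $k-1\leq k'-2<k'-1$, and $(\lambda,\mu,k-1)\prec(\lambda',\mu',k'-1)$ by condition (2) in Definition 3.4. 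If $|\lambda|+|\mu|=N$, $k=k'$, and $(\lambda,\mu,k')\prec(\lambda',\mu',k')$ via conditions (3) or (4), then the identical $\mu$- and $\lambda$-comparisons give $(\lambda,\mu,k'-1)\prec(\lambda',\mu',k'-1)$. Hence only the leading term $(\lambda',\mu',k')$ survives modulo $W_{\psi}(\lambda',\mu',k'-1)$, producing the asserted congruence.

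The main obstacle is this ordering bookkeeping: one must carefully verify that for a tuple $(\lambda,\mu,k)$ already strictly below $(\lambda',\mu',k')$ in $(\widetilde{\mathcal{P}},\prec)$, shrinking $k$ by one keeps the resulting tuple strictly below $(\lambda',\mu',k'-1)$. This is precisely where the layered definition of $\prec$ interacts delicately with the $d_2(\mathbf{0})$-action, since a tuple at the same $|\cdot|+|\cdot|$-level and the same $k$-level could a priori leak out of $W_{\psi}(\lambda',\mu',k'-1)$ after the shift if the $\mu$- or $\lambda$-comparison were not inherited.
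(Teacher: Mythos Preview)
Your proposal is correct and follows essentially the same route as the paper: both arguments rest on the identity $d_2(2\varepsilon_2)\,d_2^{k}(\mathbf{0})=(d_2(\mathbf{0})-2)^{k}\,d_2(2\varepsilon_2)$, discard the commutator $[d_2(2\varepsilon_2),x_{1,\lambda}x_{2,\mu}]$ as contributing only monomials with strictly smaller $|\bar\lambda|+|\bar\mu|$, and then keep the $-2k$ term from the binomial expansion. The only difference is that you spell out the ordering bookkeeping (why $(\lambda,\mu,k)\prec(\lambda',\mu',k')$ forces $(\lambda,\mu,k-1)\prec(\lambda',\mu',k'-1)$) that the paper leaves implicit in its final ``$\equiv$'' step.
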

\begin{proof} Since
$d_2(2\varepsilon_2)z=zd_2(2\varepsilon_2)$ and
$d_2(2\varepsilon_2)d_2^k(\mathbf{0})=
(d_2(\mathbf{0})-2)^{k}d_2(2\varepsilon_2)$, we have
$$\begin{array}{rl}\vspace{2mm}
& d_2(2\varepsilon_2)\cdot
uw-\psi(d_2(2\varepsilon_2))uw\\
\vspace{2mm} \equiv&
\sum_{(\lambda,\mu,k)\in\Lambda_N}[d_2(2\varepsilon_2),
x_{\lambda,\mu,k}]f_{\lambda,\mu,k}(z)w\\
\vspace{2mm}
\equiv&\sum_{(\lambda,\mu,k)\in\Lambda_N}[d_2(2\varepsilon_2),
x_{1,\lambda}x_{2,\mu}d_2^k(\mathbf{0})]f_{\lambda,\mu,k}(z)w\\
\vspace{2mm} \equiv&\sum_{(\lambda,\mu,k)\in\Lambda_N}
x_{1,\lambda}x_{2,\mu}[d_2(2\varepsilon_2),d_2^k(\mathbf{0})]f_{\lambda,\mu,k}(z)w
\\
\vspace{2mm} \equiv&\sum_{(\lambda,\mu,k)\in\Lambda_N}
x_{1,\lambda}x_{2,\mu}((d_2(\mathbf{0})-2)^{k}-d_2^k(\mathbf{0}))d_2(2\varepsilon_2)f_{\lambda,\mu,k}(z)w
\\
\vspace{2mm} \equiv&\sum_{(\lambda,\mu,k)\in\Lambda_N}
(-2k)x_{1,\lambda}x_{2,\mu}d_2^{k-1}(\mathbf{0})f_{\lambda,\mu,k}(z)d_2(2\varepsilon_2)w
\\ \vspace{2mm}\equiv&
-2k'\psi(d_2(2\varepsilon_2))x_{1,\lambda'}x_{2,\mu'}d_2^{k'-1}(\mathbf{0})f_{\lambda',\mu',k'}(z)w
\\
\vspace{2mm}\equiv&
-2k'\psi(d_2(2\varepsilon_2))x_{\lambda',\mu',k'-1}f_{\lambda',\mu',k'}(z)w
\quad (\textrm{mod}\ W_{\psi}(\lambda',\mu',k'-1)).
\end{array}$$
\end{proof}

\begin{lemm} Suppose $k'=0$ and $\mu'\neq\bar{0}$.
Let $\alpha=(\alpha_1,\alpha_2)$ be the minimum element in
$S_{\mu'}$.

(1) If $\alpha_1>0$, then
 $$\begin{array}{ll}&(d_1(\alpha+2\varepsilon_2)-\psi(d_1(\alpha+2\varepsilon_2)))
uw\\
\equiv&
-\alpha_1\mu'(\alpha)\psi(d_2(2\varepsilon_2))x_{1,\lambda'}x_{2,\mu_\alpha'}f_{\lambda',\mu',0}(z)w
  \quad(\textrm{mod}\ W_\psi(\lambda',\mu_\alpha',0))
\end{array}$$

(2) If $\alpha_1=0$, then
 $$\begin{array}{ll}&(d_2(\alpha+2\varepsilon_2)-\psi(d_2(\alpha+2\varepsilon_2)))
uw\\
\equiv&
-2(\alpha_2+1)\mu'(\alpha)\psi(d_2(2\varepsilon_2))x_{1,\lambda'}x_{2,\mu_\alpha'}f_{\lambda',\mu',0}(z)w
  \quad(\textrm{mod}\ W_\psi(\lambda',\mu_\alpha',0))
\end{array}$$
\end{lemm}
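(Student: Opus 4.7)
The plan is to exploit the fact that $\psi(D)=0$ in both cases to reduce $(D-\psi(D))uw$ to a single Leibniz sum of commutators, and then to isolate the unique contribution surviving modulo $W_{\psi}(\lambda',\mu'_\alpha,0)$ by a size-in-$\mathbb{Z}^2$ argument.

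First, because $\alpha>\mathbf{0}$ we have $\alpha+2\varepsilon_2>2\varepsilon_2$, so by Proposition 2.1 the element $D\in\mathcal{L}_{\alpha+2\varepsilon_2}$ lies in $[\mathcal{L}^+,\mathcal{L}^+]$; since $\psi$ takes values in the abelian $\mathbb{C}$, this gives $\psi(D)=0$. Combined with $[D,d_1(\mathbf{0})]=-\alpha_1 D$ and $[D,d_2(\mathbf{0})]=-(\alpha_2+2)D$, this forces $D\cdot g(z,d_2(\mathbf{0}))w=0$ for every polynomial $g$, so
\[
(D-\psi(D))uw \;=\; Duw \;=\; \sum_{(\lambda,\mu,k)\in P(uw)}[D,\,x_{1,\lambda}x_{2,\mu}]\,d_2^k(\mathbf{0})\,f_{\lambda,\mu,k}(z)\,w.
\]
Expanding each commutator by Leibniz gives single-factor brackets $[D,d_i(-\gamma)]\in\mathcal{L}_{\alpha+2\varepsilon_2-\gamma}$, and writing $\delta:=\alpha+2\varepsilon_2-\gamma$: if $\delta\leq\mathbf{0}$ the bracket is absorbed into a PBW monomial whose size $|\lambda|+|\mu|$ has dropped by $\alpha+2\varepsilon_2$; if $\delta>\mathbf{0}$ it is moved rightward through the remaining $\mathfrak{b}^-$ factors (producing only strictly smaller sub-residuals) to act on $w$ as $\psi(d_j(\delta))$, leaving a residual of size $|\lambda|+|\mu|-\gamma$. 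Proposition 2.1 forces $\psi(d_j(\delta))=0$ unless $\delta\in\{\varepsilon_2,2\varepsilon_2\}$ with $(j,\delta)\neq(1,2\varepsilon_2)$.

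Next, comparing these residual sizes with $N-\alpha=|\lambda'|+|\mu'_\alpha|$ in lex order, the top-degree terms ($|\lambda|+|\mu|=N$) only reach the cut-off when $\gamma=\alpha$, since $\gamma=\alpha+\varepsilon_2$ and $\delta\leq\mathbf{0}$ each strictly lower the size; lower-degree terms give size below $N-\alpha$ when $\gamma\geq\alpha$, and vanishing $\psi$-value when $\gamma<\alpha$ (since then $\delta>2\varepsilon_2$). Among top-degree $(\lambda,\mu,0)\prec(\lambda',\mu',0)$ with $\mu(\alpha)>0$, a short case analysis on the minimum of $S_{\mu,\mu'}$ in Definition 3.3 gives $\mu_\alpha<\mu'_\alpha$ and hence $(\lambda,\mu_\alpha,0)\prec(\lambda',\mu'_\alpha,0)$, so those contributions are absorbed as well. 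Leibniz hits on $d_1(-\alpha)$ inside $x_{1,\lambda'}$ produce only multiples of $d_1(2\varepsilon_2)$, which $\psi$ kills because $d_1(2\varepsilon_2)=-[d_1(\varepsilon_2),d_2(\varepsilon_2)]$. The $\mu'(\alpha)$ Leibniz hits on $d_2(-\alpha)$ inside $x_{2,\mu'}$ produce
\[
[d_1(\alpha+2\varepsilon_2),d_2(-\alpha)]=-\alpha_1 d_2(2\varepsilon_2)-(\alpha_2+2)d_1(2\varepsilon_2),\quad [d_2(\alpha+2\varepsilon_2),d_2(-\alpha)]=-2(\alpha_2+1)d_2(2\varepsilon_2),
\]
and after killing the $d_1(2\varepsilon_2)$ pieces and moving the surviving $d_2(2\varepsilon_2)$ past $x_{1,\lambda'}x_{2,\mu'_\alpha}f(z)$ modulo strictly smaller terms, one reads off the coefficients $-\alpha_1\mu'(\alpha)\psi(d_2(2\varepsilon_2))$ and $-2(\alpha_2+1)\mu'(\alpha)\psi(d_2(2\varepsilon_2))$ asserted in (1) and (2).

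The main obstacle is the size/ordering bookkeeping: one must verify the partition comparison $\mu_\alpha<\mu'_\alpha$ across all sub-cases of Definition 3.3, and check that every sub-commutator arising when $d_2(2\varepsilon_2)$ is commuted rightward through $x_{1,\lambda'}x_{2,\mu'_\alpha}$ drops the size by at least $\varepsilon_2$ in lex order, so that it lies inside $W_\psi(\lambda',\mu'_\alpha,0)$.
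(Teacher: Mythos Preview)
Your outline is correct and follows essentially the same strategy as the paper: both arguments use $\psi(D)=0$ (from Proposition~2.1), expand $D\cdot uw$ via Leibniz, discard contributions from $(\lambda,\mu,k)\notin\Lambda_N$ by a size bound, and then isolate the single surviving hit on a factor $d_2(-\alpha)$ in $x_{2,\mu'}$ via the bracket computations you wrote down. The paper presents this as a line-by-line computation (first dropping $[D,x_{1,\lambda}]x_{2,\mu}fw$, then isolating $[D,x_{2,\mu'}]$), while you phrase it as a grade/size filtration argument; your extra remark that $\psi(d_1(2\varepsilon_2))=0$ and your explicit verification that $\mu_\alpha<\mu'_\alpha$ in the sub-cases of Definition~3.3 actually fill in points the paper leaves implicit.

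One place where your write-up could be tightened: when you say the sub-residuals from moving a positive-grade bracket rightward are ``strictly smaller,'' you should make explicit that this is meant recursively, so that any multi-hit chain with $\sum_j\gamma_j=\alpha$ and final grade $2\varepsilon_2$ necessarily stays entirely inside $x_{1,\lambda}$ (since $S_\mu$ has no elements below $\alpha$ for $\mu\le\mu'$, as you implicitly use), hence produces only $d_1(2\varepsilon_2)$ and is killed by $\psi$. With that remark added, the outline is complete.
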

\begin{proof} By the definitions of  $(\lambda',\mu',k'), N, \Lambda_N$, we have
 $$uw=\!\!\sum_{(\lambda,\mu,k)\not\in\Lambda_N}\!\!
x_{\lambda,\mu,k}f_{\lambda,\mu,k}(z)w
+\!\!\sum_{\substack{(\lambda,\mu,k)\prec (\lambda',\mu',k')\\
(\lambda,\mu,k)\in\Lambda_N}}\!\!
x_{\lambda,\mu,k}f_{\lambda,\mu,k}(z)w
+x_{\lambda',\mu',k'}f_{\lambda',\mu',k'}(z)w.$$

For (1), let $(\lambda,\mu,k)\in P(uw)$ and
$v=d_1(\alpha+2\varepsilon_2)
x_{\lambda,\mu,k}f_{\lambda,\mu,k}(z)w$. If
$(\lambda,\mu,k)\not\in\Lambda_N$, then $|\lambda|+|\mu|<N$, which
implies $|\deg v|\leq|\lambda|+|\mu|-\alpha<N-\alpha$ while $v\neq
0$. Thus we have
$$d_1(\alpha+2\varepsilon_2)
x_{\lambda,\mu,k}f_{\lambda,\mu,k}(z)w\equiv 0
  \quad(\textrm{mod}\ W_\psi(\lambda',\mu_\alpha',0)).$$
If $(\lambda,\mu,k)\in\Lambda_N$ but $(\lambda,\mu,k)\prec
(\lambda',\mu',0)$, we have $k=0$ and
$$\begin{array}{ll}\vspace{2mm} & d_1(\alpha+2\varepsilon_2)
x_{\lambda,\mu,0}f_{\lambda,\mu,0}(z)w\\ \vspace{2mm} \equiv &
 d_1(\alpha+2\varepsilon_2)
x_{1,\lambda}x_{2,\mu}f_{\lambda,\mu,0}(z)w  \\
\vspace{2mm} \equiv & [d_1(\alpha+2\varepsilon_2),
x_{1,\lambda}]x_{2,\mu}f_{\lambda,\mu,0}(z)w +
x_{1,\lambda}d_1(\alpha+2\varepsilon_2)x_{2,\mu}f_{\lambda,\mu,0}(z)w
\\
\vspace{2mm} \equiv &
x_{1,\lambda}[d_1(\alpha+2\varepsilon_2),x_{2,\mu}]f_{\lambda,\mu,0}(z)w
+
x_{1,\lambda}x_{2,\mu}d_1(\alpha+2\varepsilon_2)f_{\lambda,\mu,0}(z)w
\\
\vspace{2mm} \equiv &
-\alpha_1\mu(\alpha)\psi(d_2(2\varepsilon_2))x_{1,\lambda}x_{2,\mu_\alpha}f_{\lambda,\mu,0}(z)w\\
\equiv &0
  \quad(\textrm{mod}\ W_\psi(\lambda',\mu_\alpha',0)).\end{array}$$
Therefore, we have
$$\begin{array}{ll}\vspace{2mm} & (d_1(\alpha+2\varepsilon_2)-\psi(d_1(\alpha+2\varepsilon_2)))
uw\\ \vspace{2mm} \equiv &
 d_1(\alpha+2\varepsilon_2)
x_{1,\lambda'}x_{2,\mu'}f_{\lambda',\mu',0}(z)w  \\
\vspace{2mm} \equiv & [d_1(\alpha+2\varepsilon_2),
x_{1,\lambda'}]x_{2,\mu'}f_{\lambda',\mu',0}(z)w +
x_{1,\lambda'}d_1(\alpha+2\varepsilon_2)x_{2,\mu'}f_{\lambda',\mu',0}(z)w
\\
\vspace{2mm} \equiv &
x_{1,\lambda'}[d_1(\alpha+2\varepsilon_2),x_{2,\mu'}]f_{\lambda',\mu',0}(z)w
+
x_{1,\lambda'}x_{2,\mu'}d_1(\alpha+2\varepsilon_2)f_{\lambda',\mu',0}(z)w
\\
 \equiv &
\alpha_1\mu'(\alpha)\psi(d_2(2\varepsilon_2))x_{1,\lambda'}x_{2,\mu_\alpha'}f_{\lambda',\mu',0}(z)w
  \quad(\textrm{mod}\ W_\psi(\lambda',\mu_\alpha',0)).\end{array}$$

For (2), let $(\lambda,\mu,k)\in P(uw)$ and
$v=d_2(\alpha+2\varepsilon_2)
x_{\lambda,\mu,k}f_{\lambda,\mu,k}(z)w$. If
$(\lambda,\mu,k)\not\in\Lambda_N$, then $|\lambda|+|\mu|<N$, which
implies $|\deg v|\leq|\lambda|+|\mu|-\alpha<N-\alpha$ while $v\neq
0$. Thus we have
$$d_2(\alpha+2\varepsilon_2)
x_{\lambda,\mu,k}f_{\lambda,\mu,k}(z)w\equiv 0
  \quad(\textrm{mod}\ W_\psi(\lambda',\mu_\alpha',0)).$$
If $(\lambda,\mu,k)\in\Lambda_N$ but $(\lambda,\mu,k)\prec
(\lambda',\mu',0)$, we have $k=0$ and $\mu\leq \mu'$. Using
$\alpha_1=0$ and $\alpha_2>0$, we have
$$\begin{array}{ll}\vspace{2mm} & d_2(\alpha+2\varepsilon_2)
x_{\lambda,\mu,k}f_{\lambda,\mu,0}(z)w\\ \vspace{2mm} \equiv &
 d_2(\alpha+2\varepsilon_2)
x_{1,\lambda}x_{2,\mu}f_{\lambda,\mu,0}(z)w  \\
\vspace{2mm} \equiv & [d_2(\alpha+2\varepsilon_2),
x_{1,\lambda}]x_{2,\mu}f_{\lambda,\mu,0}(z)w +
x_{1,\lambda}d_2(\alpha+2\varepsilon_2)x_{2,\mu}f_{\lambda,\mu,0}(z)w
\\
\vspace{2mm} \equiv &
x_{1,\lambda}[d_2(\alpha+2\varepsilon_2),x_{2,\mu}]f_{\lambda,\mu,0}(z)w
+
x_{1,\lambda}x_{2,\mu}d_2(\alpha+2\varepsilon_2)f_{\lambda,\mu,0}(z)w
\\
\vspace{2mm} \equiv &
-2(\alpha_2+1)\mu(\alpha)\psi(d_2(2\varepsilon_2))x_{1,\lambda}x_{2,\mu_\alpha}f_{\lambda,\mu,0}(z)w\\
\equiv &0
  \quad(\textrm{mod}\ W_\psi(\lambda',\mu_\alpha',0)).\end{array}$$
Therefore, we have
$$\begin{array}{ll}\vspace{2mm} & (d_2(\alpha+2\varepsilon_2)-\psi(d_2(\alpha+2\varepsilon_2)))
uw\\ \vspace{2mm} \equiv &
 d_2(\alpha+2\varepsilon_2)
x_{1,\lambda'}x_{2,\mu'}f_{\lambda',\mu',0}(z)w  \\
\vspace{2mm} \equiv & [d_2(\alpha+2\varepsilon_2),
x_{1,\lambda'}]x_{2,\mu'}f_{\lambda',\mu',0}(z)w +
x_{1,\lambda'}d_2(\alpha+2\varepsilon_2)x_{2,\mu'}f_{\lambda',\mu',0}(z)w
\\
\vspace{2mm} \equiv &
x_{1,\lambda'}[d_2(\alpha+2\varepsilon_2),x_{2,\mu'}]f_{\lambda',\mu',0}(z)w
+
x_{1,\lambda'}x_{2,\mu'}d_2(\alpha+2\varepsilon_2)f_{\lambda',\mu',0}(z)w
\\
 \equiv &
-2(\alpha_2+1)\mu'(\alpha)\psi(d_2(2\varepsilon_2))x_{1,\lambda'}x_{2,\mu_\alpha'}f_{\lambda',\mu',0}(z)w
  \quad(\textrm{mod}\ W_\psi(\lambda',\mu_\alpha',0)).\end{array}$$
\end{proof}

\begin{lemm} Suppose $\mu'=\bar{0}$, $k'=0$, $S_{\lambda'}^+\neq\emptyset$.
  Let
  $\alpha=(\alpha_1,\alpha_2)$ be the minimum element in
  $S_{\lambda'}^+$. We have$$\begin{array}{ll}&(d_2(\alpha+2\varepsilon_2)-\psi(d_2(\alpha+2\varepsilon_2)))
uw\\
\equiv& -\alpha_1\lambda'(\alpha)\psi(d_2(2\varepsilon_2))
 x_{1,\lambda_\alpha'}f_{\lambda',\bar{0},0}(z)w
  \quad(\textrm{mod}\ W_\psi(\lambda_\alpha',\bar{0},0))
\end{array}$$
\end{lemm}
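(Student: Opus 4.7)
The plan is to follow the structure of the proof of Lemma 3.8, adapted to the new setting where $\mu'=\bar{0}$, $k'=0$ and one reduces the $x_{1,\lambda'}$-part using the minimum of $S^+_{\lambda'}$. First, decompose
$$uw=\!\!\!\!\!\sum_{(\lambda,\mu,k)\notin\Lambda_N}\!\!\!\!\!x_{\lambda,\mu,k}f_{\lambda,\mu,k}(z)w+\!\!\!\!\sum_{\substack{(\lambda,\mu,k)\in\Lambda_N\\ (\lambda,\mu,k)\prec(\lambda',\bar{0},0)}}\!\!\!\!x_{\lambda,\mu,k}f_{\lambda,\mu,k}(z)w+x_{1,\lambda'}f_{\lambda',\bar{0},0}(z)w,$$
apply $d_2(\alpha+2\varepsilon_2)-\psi(d_2(\alpha+2\varepsilon_2))$ to each summand, and verify that everything except the explicit leading contribution lies in $W_\psi(\lambda_\alpha',\bar{0},0)$.

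The decisive local computation is $[d_2(\alpha+2\varepsilon_2),d_1(-\alpha)]=-\alpha_2 d_1(2\varepsilon_2)-\alpha_1 d_2(2\varepsilon_2)$. By Proposition 2.1, $d_1(2\varepsilon_2)\in[\mathcal{L}^+,\mathcal{L}^+]$ gives $\psi(d_1(2\varepsilon_2))=0$, so only the $-\alpha_1 d_2(2\varepsilon_2)$ piece survives the final $\psi$-evaluation on $w$. Expanding $[d_2(\alpha+2\varepsilon_2),x_{1,\lambda}]$ by Leibniz and commuting each bracketed factor $d_i(\alpha+2\varepsilon_2-\lambda^j)$ to the right, only positions $j$ with $\lambda^j=\alpha$ produce a top-magnitude contribution $-\alpha_1\lambda(\alpha)\psi(d_2(2\varepsilon_2))x_{1,\lambda_\alpha}f_{\lambda,\bar{0},0}(z)w$; every other position gives a strict magnitude drop, either because $\alpha+2\varepsilon_2-\lambda^j>2\varepsilon_2$ kills $\psi$ (yielding drop by $\alpha+2\varepsilon_2$), or because $\alpha+2\varepsilon_2-\lambda^j\in\{\varepsilon_2,\mathbf{0}\}$ or is negative (a direct size count gives drop by at least $\alpha+\varepsilon_2$).

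Putting this together: terms with $(\lambda,\mu,k)\notin\Lambda_N$ have images of magnitude $<|\lambda'|-\alpha=|\lambda_\alpha'|$ and lie in $W_\psi(\lambda_\alpha',\bar{0},0)$ via condition (1) of $\prec$. The leading term $x_{1,\lambda'}f_{\lambda',\bar{0},0}(z)w$ produces precisely the claimed expression. A middle-class term, forced by $\mu'=\bar{0}$ and $k'=0$ to satisfy $k=0$, $\mu=\bar{0}$, $\lambda\prec\lambda'$, yields top-magnitude contribution $-\alpha_1\lambda(\alpha)\psi(d_2(2\varepsilon_2))x_{1,\lambda_\alpha}f_{\lambda,\bar{0},0}(z)w$, which is zero when $\lambda(\alpha)=0$ by the convention $x_{1,\lambda_\alpha}=0$.

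The main obstacle is the combinatorial claim: if $\lambda\prec\lambda'$ and $\lambda(\alpha)>0$, then $\lambda_\alpha\prec\lambda_\alpha'$, which is what places the remaining middle-class residues into $W_\psi(\lambda_\alpha',\bar{0},0)$. I would split on whether $S^+_{\lambda,\lambda'}$ is empty. If it is, then $\lambda$ and $\lambda'$ agree on every index with positive first coordinate, so the minimum $\beta$ of $S_{\lambda,\lambda'}$ has $\beta_1=0\neq\alpha_1$; removing one $\alpha$ from each side preserves $S^+_{\lambda_\alpha,\lambda_\alpha'}=\emptyset$ and keeps $\beta$ as the minimum of $S_{\lambda_\alpha,\lambda_\alpha'}$ with the same strict inequality. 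Otherwise, the minimum $\beta$ of $S^+_{\lambda,\lambda'}$ satisfies $\beta\geq\alpha$ (since $\beta<\alpha$ would force $\lambda'(\beta)=0$, contradicting $\lambda(\beta)<\lambda'(\beta)$), and removing one $\alpha$ from each side keeps $\beta$ as the minimum of $S^+_{\lambda_\alpha,\lambda_\alpha'}$ with the inequality preserved. Either way $\lambda_\alpha\prec\lambda_\alpha'$, completing the argument.
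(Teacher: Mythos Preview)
Your argument is correct and follows essentially the same route as the paper's proof. Both proofs split $uw$ into low-magnitude terms, the leading term $x_{1,\lambda'}f_{\lambda',\bar0,0}(z)w$, and the intermediate $\Lambda_N$ terms, and both identify the top-magnitude contribution from each $(\lambda,\bar0,0)\in\Lambda_N$ as $-\alpha_1\lambda(\alpha)\psi(d_2(2\varepsilon_2))x_{1,\lambda_\alpha}f_{\lambda,\bar0,0}(z)w$.

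The only difference is organizational. The paper reorders the factors of $x_{1,\lambda}$ as $(d_1(-\alpha))^{\lambda(\alpha)}\,d_1(-\beta^{(1)})\cdots d_1(-\beta^{(m)})\,d_1(-\gamma^{(1)})\cdots d_1(-\gamma^{(l)})$ with $\beta^{(i)}>\alpha$ and $\gamma^{(i)}_1=0$, and then commutes $d_2(\alpha+2\varepsilon_2)$ through this product; the required fact that no $\lambda^j$ with $\lambda^j_1>0$ and $\lambda^j<\alpha$ occurs (so that this decomposition exhausts the factors) is left implicit. You instead run a direct Leibniz expansion and explicitly isolate the combinatorial claim that $\lambda\prec\lambda'$ and $\lambda(\alpha)>0$ imply $\lambda_\alpha\prec\lambda'_\alpha$, which is exactly what is needed to absorb the middle-class residues into $W_\psi(\lambda'_\alpha,\bar0,0)$; your case split on $S^+_{\lambda,\lambda'}$ is a clean way to verify this. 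The paper uses this fact tacitly in passing from $-\alpha_1\lambda(\alpha)\psi(d_2(2\varepsilon_2))x_{1,\lambda_\alpha}f_{\lambda,\bar0,0}(z)w$ to the $\delta_{\lambda,\lambda'}$ line. So your write-up is, if anything, slightly more explicit on this point than the paper's own proof.
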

\begin{proof} Let $(\lambda,\mu,k)\in P(uw)$ and
$v=d_2(\alpha+2\varepsilon_2)
x_{\lambda,\mu,k}f_{\lambda,\mu,k}(z)w$.
 If $|(\lambda,\mu,k)|<|\lambda'|$, then  $|\deg
v|< |\lambda'|-\alpha$ while $v\neq 0$. Thus we have
$$d_2(\alpha+2\varepsilon_2)
x_{\lambda,\mu,k}f_{\lambda,\mu,k}(z)w\equiv 0
  \quad(\textrm{mod}\ W_\psi(\lambda_\alpha',\bar{0},0)).$$

If $|(\lambda,\mu,k)|=|\lambda'|$, we have $\mu=\bar{0}$, $k=0$ and
$\lambda\preceq\lambda'$. We may write
$$d_{1,\lambda}=(d_1(-\alpha))^{\lambda(\alpha)}d_1(-\beta^{(1)})\cdots
d_1(-\beta^{(m)})d_1(-\gamma^{(1)})\cdots d_1(-\gamma^{(l)})$$ where
$\alpha<\beta^{(1)}\leq\cdots\leq\beta^{(m)}$,
$\textbf{0}<\gamma^{(1)}\leq\cdots \leq\gamma^{(l)}$ and
$\gamma^{(l)}=(0,\gamma^{(l)}_2)$. Using $\alpha_1>0$, we have
$$\begin{array}{ll}\vspace{2mm} & d_2(\alpha+2\varepsilon_2)
x_{\lambda,\bar{0},0}f_{\lambda,\bar{0},0}(z)w\\ \vspace{2mm} \equiv
&
 d_2(\alpha+2\varepsilon_2)
(d_1(-\alpha))^{\lambda(\alpha)}d_1(-\beta^1)\cdots
d_1(-\beta^m)d_1(-\gamma^1)\cdots d_1(-\gamma^l)f_{\lambda,\bar{0},0}(z)w \\
\vspace{2mm} \equiv & [d_2(\alpha+2\varepsilon_2),
(d_1(-\alpha))^{\lambda(\alpha)}]d_1(-\beta^{(1)})\cdots
d_1(-\beta^{(m)})d_1(-\gamma^{(1)})\cdots
d_1(-\gamma^{(l)})f_{\lambda,\bar{0},0}(z)w \\ \vspace{2mm} &+
(d_1(-\alpha))^{\lambda(\alpha)}d_2(\alpha+2\varepsilon_2)d_1(-\beta^{(1)})\cdots
d_1(-\beta^{(m)})d_1(-\gamma^{(1)})\cdots
d_1(-\gamma^{(l)})f_{\lambda,\bar{0},0}(z)w
\\
\vspace{2mm} \equiv &
-\alpha_1\lambda(\alpha)(d_1(-\alpha))^{\lambda(\alpha)-1}d_2(2\varepsilon_2)d_1(-\beta^{(1)})\cdots
d_1(-\beta^{(m)})d_1(-\gamma^{(1)})\cdots
d_1(-\gamma^{(l)})f_{\lambda,\bar{0},0}(z)w \\ \vspace{2mm} \equiv
&-\alpha_1\lambda(\alpha)\psi(d_2(2\varepsilon_2))d_{1,\lambda_\alpha}f_{\lambda,\bar{0},0}(z)w
\\ \equiv
&-\delta_{\lambda,\lambda'}\alpha_1\lambda'(\alpha)\psi(d_2(2\varepsilon_2))d_{1,\lambda_\alpha'}f_{\lambda',\bar{0},0}(z)w
  \quad(\textrm{mod}\ W_\psi(\lambda_\alpha',\bar{0},0))\end{array}$$
where $\delta_{\lambda,\lambda'}=1$ if $\lambda=\lambda'$, and
$\delta_{\lambda,\lambda'}=0$ if $\lambda\neq\lambda'$. Therefore,
we have
$$
\begin{array}{ll}
&(d_2(\alpha+2\varepsilon_2)-\psi(d_2(\alpha+2\varepsilon_2)))uw\\
\vspace{2mm} \equiv& d_2(\alpha+2\varepsilon_2)uw\\
\vspace{2mm}\equiv&-\alpha_1\lambda'(\alpha)\psi(d_2(2\varepsilon_2))d_{1,\lambda_\alpha'}f_{\lambda',\bar{0},0}(z)w
  \quad(\textrm{mod}\ W_\psi(\lambda_\alpha',\bar{0},0))
\end{array}$$
as required.
\end{proof}

\begin{lemm} Suppose
$\mu'=\bar{0}$, $k'=0$, $S_{\lambda'}^+=\emptyset$,
$S_{\lambda'}\neq\emptyset$ and $\Lambda_{N-\varepsilon_2}=
\emptyset$. Let
  $\alpha=(0,\alpha_2)$ be the minimum element in $S_{\lambda'}$, we have
  $$\begin{array}{ll}&(d_2(\alpha+\varepsilon_2)-\psi(d_2(\alpha+\varepsilon_2)))
uw\\
\equiv& -\alpha_2\lambda'(\alpha)\psi(d_1(\varepsilon_2))
 x_{\lambda_\alpha',\bar{0},0}f_{\lambda',\bar{0},0}(z)w
  \quad(\textrm{mod}\ W_\psi(\lambda_\alpha',\bar{0},0))
\end{array}$$
\end{lemm}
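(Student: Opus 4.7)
The plan is to follow the template of Lemma~3.8, case~(2): I will apply $d_2(\alpha+\varepsilon_2)-\psi(d_2(\alpha+\varepsilon_2))$ termwise to the PBW expansion of $uw$, isolate the top summand $x_{1,\lambda'}f_{\lambda',\bar{0},0}(z)w$, and show that every other contribution is absorbed into $W_\psi(\lambda_\alpha',\bar{0},0)$. Commutation with $f(z)$ (using $[d_2(\alpha+\varepsilon_2),z]=0$, a consequence of $(\alpha+\varepsilon_2)_1=0$) reduces each summand to the Leibniz expansion of $[d_2(\alpha+\varepsilon_2),x_{1,\lambda}x_{2,\mu}d_2^k(\mathbf{0})]$, built from the elementary commutators
\begin{equation*}
[d_2(\alpha+\varepsilon_2),d_1(-\gamma)]=-\gamma_2\,d_1(\alpha+\varepsilon_2-\gamma),\qquad [d_2(\alpha+\varepsilon_2),d_2(-\gamma)]=-(\gamma_2+\alpha_2+1)\,d_2(\alpha+\varepsilon_2-\gamma).
\end{equation*}

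The key observation is that $\psi$ vanishes on $[\mathcal{L}^{+},\mathcal{L}^{+}]$, so Proposition~2.1 gives $\psi(d_i(\beta))=0$ whenever $d_i(\beta)\notin\Omega$. Thus in the $d_1$-commutator, only $\gamma=\alpha$ (which produces $d_1(\varepsilon_2)\in\Omega$) survives under $\psi$, and in the $d_2$-commutator only $\gamma\in\{\alpha,\alpha-\varepsilon_2\}$ do. For the top summand, the hypothesis $S_{\lambda'}^{+}=\emptyset$ makes every entry of $\lambda'$ have first coordinate zero, so $[d_1(\varepsilon_2),d_1(-\gamma)]=-\gamma_1 d_1(\varepsilon_2-\gamma)=0$ lets the newly produced $d_1(\varepsilon_2)$ commute freely through the remainder of $x_{1,\lambda_\alpha'}$ and act on $w$ as $\psi(d_1(\varepsilon_2))$; summing over the $\lambda'(\alpha)$ positions occupied by $d_1(-\alpha)$ then yields precisely the claimed leading term.

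For the other summands $(\lambda,\mu,k)\neq(\lambda',\bar{0},0)$ of $P(uw)$ two sub-cases arise. If $(\lambda,\mu,k)\in\Lambda_N$ with $(\lambda,\mu,k)\prec(\lambda',\bar{0},0)$, Definition~3.4 combined with $S_{\lambda'}^{+}=\emptyset$ forces $\mu=\bar{0}$, $k=0$, $\lambda\prec\lambda'$, $S_\lambda^{+}=\emptyset$, and $\lambda(\gamma)=0$ for every $\gamma<\alpha$ (since $\alpha=\min S_{\lambda'}$ forces the first difference between $\lambda$ and $\lambda'$ to occur at some position $\geq\alpha$); the computation above then produces $-\alpha_2\lambda(\alpha)\psi(d_1(\varepsilon_2))x_{1,\lambda_\alpha}f_{\lambda,\bar{0},0}(z)w$, which lies in $W_\psi(\lambda_\alpha',\bar{0},0)$ because $\lambda\prec\lambda'$ transfers to $\lambda_\alpha\prec\lambda_\alpha'$. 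If $(\lambda,\mu,k)\notin\Lambda_N$, then $|\lambda|+|\mu|<N$ and the hypothesis $\Lambda_{N-\varepsilon_2}=\emptyset$ sharpens this to $|\lambda|+|\mu|\leq N-2\varepsilon_2$; the two potentially dangerous commutators are the $d_2(-\gamma)$-commutator at $\gamma=\alpha-\varepsilon_2$, which produces $\psi(d_2(2\varepsilon_2))\neq 0$ at PBW degree $|\lambda|+|\mu|-(\alpha-\varepsilon_2)\leq N-\alpha-\varepsilon_2$, and, when $\alpha_2=1$, the $d_2^k(\mathbf{0})$-commutator, which produces $\psi(d_2(\alpha+\varepsilon_2))=\psi(d_2(2\varepsilon_2))\neq 0$ at PBW degree $|\lambda|+|\mu|\leq N-2\varepsilon_2$; both bounds are strictly below $|\lambda_\alpha'|=N-\alpha$. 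I expect this last bookkeeping to be the main obstacle: without the hypothesis $\Lambda_{N-\varepsilon_2}=\emptyset$, a summand with $|\lambda|+|\mu|=N-\varepsilon_2$ would leak a contribution at the critical PBW degree $N-\alpha$ and spoil the congruence.
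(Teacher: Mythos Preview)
Your proposal is correct and follows essentially the same approach as the paper: split $uw$ into the $\Lambda_N$ part and the part with $|(\lambda,\mu,k)|\le N-2\varepsilon_2$, then show that applying $d_2(\alpha+\varepsilon_2)-\psi(d_2(\alpha+\varepsilon_2))$ to the latter lands in $W_\psi(\lambda_\alpha',\bar{0},0)$ while the former yields the stated leading term. The paper's own proof is much terser (it simply invokes ``the definitions of $\psi$ and $W_\psi(\lambda_\alpha',\bar{0},0)$'' and writes two lines of computation), whereas you spell out the commutator bookkeeping and correctly isolate why the hypothesis $\Lambda_{N-\varepsilon_2}=\emptyset$ is exactly what keeps the low-degree summands below PBW level $N-\alpha$.
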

\begin{proof} Since $\Lambda_{N-\varepsilon_2}= \emptyset$,
we can write
$$uw=\sum_{|(\lambda,\mu,k)|\leq N-2\varepsilon_2}x_{\lambda,\mu,k}f_{\lambda,\mu,k}(z)
+\sum_{(\lambda,\bar{0},0)\in
\Lambda_N}x_{1,\lambda}f_{\lambda,\bar{0},0}(z).$$ By the
definitions of $\psi$ and $W_\psi(\lambda_\alpha',\bar{0},0)$, we
have
$$
\begin{array}{ll} &(d_2(\alpha+\varepsilon_2)-\psi(d_2(\alpha+\varepsilon_2)))uw\\
\vspace{2mm} \equiv & \sum_{(\lambda,\bar{0},0)\in
\Lambda_N}[d_2(\alpha+\varepsilon_2),x_{1,\lambda}]f_{\lambda,\bar{0},0}(z)
\\
\vspace{2mm} \equiv &\sum_{(\lambda,\bar{0},0)\in
\Lambda_N}(-\alpha_2)\lambda(\alpha)\psi(d_1(\varepsilon_2))
x_{1,\lambda_\alpha}f_{\lambda,\bar{0},0}(z)w\\
\vspace{2mm}\equiv&
-\alpha_2\lambda'(\alpha)\psi(d_1(\varepsilon_2))
 x_{\lambda_\alpha',\bar{0},0}f_{\lambda',\bar{0},0}(z)w
  \quad(\textrm{mod}\ W_\psi(\lambda_\alpha',\bar{0},0))
\end{array}$$
as required.
\end{proof}

\begin{lemm} Suppose
$\mu'=\bar{0}$, $k'=0$, $S_{\lambda'}^+=\emptyset$,
$S_{\lambda'}\neq\emptyset$ and $\Lambda_{N-\varepsilon_2}\neq
\emptyset$. Let
 $(\xi,\eta,l)$ be the maximum element in $\Lambda_{N-\varepsilon_2}$.

(1) If $l> 0$, we have
$$
\begin{array}{ll} &(d_1(\varepsilon_2)-\psi(d_1(\varepsilon_2)))uw\\
\vspace{2mm} \equiv & -l\psi(d_1(\varepsilon_2))
x_{\xi,\eta,l-1}f_{\xi,\eta,l}(z)w \quad(\textrm{mod}\
W_\psi(\xi,\eta,l-1))
\end{array}$$

(2) If  $l=0$ but $\eta\neq\bar{0}$, then we have
$$
\begin{array}{ll} &(d_1(\beta+\varepsilon_2)-\psi(d_1(\beta+\varepsilon_2)))uw\\
\vspace{2mm} \equiv & -(\beta+1)\eta(\beta)\psi(d_1(\varepsilon_2))
x_{\xi,\eta_\beta,0}f_{\xi,\eta,0}(z)w \quad(\textrm{mod}\
W_\psi(\xi,\eta_\beta,0))
\end{array}$$
where $\beta=(0,\beta_2)$ is the minimum element in $S_{\eta_2}$;

(3) If $l=0$ and $\eta=\bar{0}$, then we have
$$\begin{array}{ll} &(d_2(\alpha+\varepsilon_2)-\psi(d_2(\alpha+\varepsilon_2)))uw\\
 \vspace{2mm} \equiv
&-\alpha_2\lambda'(\alpha)\psi(d_2(\varepsilon_2))
x_{\lambda_\alpha',\bar{0},0}f_{\lambda',\bar{0},0}(z)w
\quad(\textrm{mod}\ W_\psi(\lambda_\alpha',\bar{0},0))
\end{array}$$
where $\alpha=(0,\alpha_2)$ be the minimum element in
$S_{\lambda'}$.
\end{lemm}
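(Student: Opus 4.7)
The plan is to handle the three subcases with the same template used in Lemmas 3.7--3.9: select a carefully chosen operator $D\in\mathcal{L}^+$, apply $D-\psi(D)$ to the PBW expansion of $uw=\sum_{(\lambda,\mu,k)} x_{\lambda,\mu,k}f_{\lambda,\mu,k}(z)w$, and reduce modulo the indicated submodule $W_\psi(\cdot)$. In every case the critical structural input is the hypothesis $S^+_{\lambda'}=\emptyset$, which forces $\lambda'$ (and, by the definition of $\prec$, every partition $\lambda\preceq\lambda'$ that appears in $\Lambda_N\cap P(uw)$) to consist entirely of parts of the form $(0,m)$. This rigidifies all level-$N$ contributions just enough that the first-component operators $d_1(\varepsilon_2)$ and $d_1(\beta+\varepsilon_2)$ with $\beta=(0,\beta_2)$ commute past every $d_1(-(0,m))$.

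For case (1), I would take $D=d_1(\varepsilon_2)$. Then $(D-\psi(D))$ annihilates the entire level-$N$ part of $uw$ because $D$ commutes with each $d_1(-(0,m))$ and, since $k'=0$, one never has to commute $D$ past any $d_2(\mathbf{0})$ at level $N$. The surviving contribution comes from the level-$(N-\varepsilon_2)$ maximum $(\xi,\eta,l)$: commuting $D$ past $d_2^l(\mathbf{0})$ produces $-l\,d_2^{l-1}(\mathbf{0})\,d_1(\varepsilon_2)$ modulo strictly lower-$k$ terms (using $[d_1(\varepsilon_2),d_2(\mathbf{0})]=-d_1(\varepsilon_2)$), and $d_1(\varepsilon_2)$ finally acts on $w$ as $\psi(d_1(\varepsilon_2))$. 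All other contributions -- from $\Lambda_{N-\varepsilon_2}$-terms strictly below $(\xi,\eta,l)$, from cross-commutators with $x_{1,\xi}$ or $x_{2,\eta}$, and from sub-$\Lambda_{N-\varepsilon_2}$ levels -- land in $W_\psi(\xi,\eta,l-1)$ under the ordering $\prec$.

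For case (2), I would take $D=d_1(\beta+\varepsilon_2)$, which lies in $[\mathcal{L}^+,\mathcal{L}^+]$ so that $\psi(D)=0$; this choice is forced by the requirement $\beta_1=0$, since the bracket formula gives $[d_1(\beta+\varepsilon_2),d_1(-(0,m))]=0$ only in that case, and only then does the level-$N$ part of $uw$ get annihilated. The essential non-vanishing bracket is $[d_1(\beta+\varepsilon_2),d_2(-\beta)]=-(\beta_2+1)d_1(\varepsilon_2)$, activated when $D$ is commuted through the factor $d_2(-\beta)^{\eta(\beta)}$ of $x_{2,\eta}$ in $(\xi,\eta,0)\in\Lambda_{N-\varepsilon_2}$; this yields the claimed $-(\beta_2+1)\eta(\beta)\psi(d_1(\varepsilon_2))\,x_{\xi,\eta_\beta,0}f_{\xi,\eta,0}(z)w$ once $d_1(\varepsilon_2)$ reaches $w$. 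All remaining brackets either lower $\mu$ strictly past $\eta_\beta$ or drop the total degree below $N-\varepsilon_2$.

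For case (3), I would take $D=d_2(\alpha+\varepsilon_2)$ and essentially replicate Lemma 3.9 for the level-$N$ part, using $[d_2(\alpha+\varepsilon_2),d_1(-\alpha)]=-\alpha_2 d_1(\varepsilon_2)$ to produce the desired coefficient on $x_{\lambda_\alpha',\bar{0},0}f_{\lambda',\bar{0},0}(z)w$. The one new verification (absent in Lemma 3.9) is that the level-$(N-\varepsilon_2)$ contribution $\sum_{(\lambda,\bar{0},0)\in\Lambda_{N-\varepsilon_2}}x_{1,\lambda}f_{\lambda,\bar{0},0}(z)w$ is also sent into $W_\psi(\lambda_\alpha',\bar{0},0)$; this holds because $D$ shifts each such summand to total degree $(N-\varepsilon_2)-(\alpha+\varepsilon_2)=N-\alpha-2\varepsilon_2<N-\alpha=|\lambda_\alpha'|$. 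The main obstacle throughout is the bookkeeping: one must systematically verify, for every commutator that arises in the PBW expansion, that the resulting term lies strictly below the indicated level in the filtration $\{W_\psi(\lambda,\mu,k)\}$ under $\prec$, which requires careful matching of the degree of each bracket against the nested ordering on $\widetilde{\mathcal{P}}$.
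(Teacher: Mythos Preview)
Your proposal is correct and follows essentially the same route as the paper's proof: the same operators $D=d_1(\varepsilon_2)$, $d_1(\beta+\varepsilon_2)$, $d_2(\alpha+\varepsilon_2)$ are chosen in cases (1)--(3), the level-$N$ terms are killed using that every $(\lambda,\mu,k)\in\Lambda_N$ has $\mu=\bar 0$, $k=0$ and $|\lambda|=(0,n)$ (hence all parts $(0,m)$), and the surviving contribution is isolated exactly as you describe. One small remark: the reason all $\lambda$ occurring in $\Lambda_N$ have parts of the form $(0,m)$ is not the ordering $\prec$ per se but the degree constraint $|\lambda|=N=(0,n)$ together with $\gamma_1\ge 0$ for $\gamma\in\mathbb Z^2_+$; and your bracket $[d_2(\alpha+\varepsilon_2),d_1(-\alpha)]=-\alpha_2 d_1(\varepsilon_2)$ actually yields $\psi(d_1(\varepsilon_2))$ in the coefficient of case~(3), whereas the paper writes $\psi(d_2(\varepsilon_2))$ --- this is a harmless typo in the paper, since both are nonzero by nonsingularity.
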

\begin{proof} Since $\mu'=\bar{0}$, $k'=0$,
$S_{\lambda'}^+=\emptyset$ and $S_{\lambda'}\neq\emptyset$, there is
a positive integer $n$ such that $N=(0,n)$. For $(\lambda,\mu,k)\in
\Lambda_N$, we have $\mu=\bar{0}$, $k=0$ and $|\lambda|=(0,n)$.
Thus, we have $[d_1(a\varepsilon_2),x_{\lambda,\mu,k}]=0$, which
implies
$(d_1(a\varepsilon_2)-\psi(d_1(a\varepsilon_2)))x_{\lambda,\mu,k}f_{\lambda,\mu,k}(z)w=0$,
where $a$ is an integer.

For (1) and (2), we can write
$$uw=\!\!\!\!\!\!\!\!\sum_{|(\lambda,\mu,k)|\leq
N-2\varepsilon_2}\!\!\!\!\!\!\!\!x_{\lambda,\mu,k}f_{\lambda,\mu,k}(z)w
+\!\!\!\!\!\!\!\!\sum_{\substack{(\lambda,\mu,k)\in
\Lambda_{N-\varepsilon_2}\\
(\lambda,\mu,k)\prec(\xi,\eta,l)}}\!\!\!\!\!\!\!\!x_{\lambda,\mu,k}f_{\lambda,\mu,k}(z)w
+\!\!\!\!\!\!\sum_{(\lambda,\bar{0},0)\in
\Lambda_N}\!\!\!\!\!\!x_{1,\lambda}f_{\lambda,\bar{0},0}(z)w+x_{\xi,\eta,l}f_{\xi,\eta,l}(z)w.$$
If $l>0$, then using the definitions of $\psi$ and
$W_\psi(\xi,\eta,l-1)$, we have
$$
\begin{array}{ll} &(d_1(\varepsilon_2)-\psi(d_1(\varepsilon_2)))uw\\
\vspace{2mm} \equiv & \sum_{\substack{(\lambda,\mu,k)\in
\Lambda_{N-\varepsilon_2}\\
(\lambda,\mu,k)\prec(\xi,\eta,l)}}[d_1(\varepsilon_2),x_{\lambda,\mu,k}f_{\lambda,\mu,k}(z)]w
+[d_1(\varepsilon_2),x_{\xi,\eta,l}f_{\xi,\eta,l}(z)]w\\
\vspace{2mm} \equiv &
x_{1,\xi}x_{2,\eta}[d_1(\varepsilon_2),d_2^l(\mathbf{0})]f_{\xi,\eta,l}(z)w\\
\vspace{2mm} \equiv & -l\psi(d_1(\varepsilon_2))
x_{1,\xi}x_{2,\eta}d_2^{l-1}(\mathbf{0})f_{\xi,\eta,l}(z)w\\
\vspace{2mm} \equiv &-l\psi(d_1(\varepsilon_2))
x_{\xi,\eta,l-1}f_{\xi,\eta,l}(z)w \quad(\textrm{mod}\
W_\psi(\xi,\eta,l-1))
\end{array}$$
as required.  If $l=0$ but $\eta\neq\bar{0}$, then let $\beta$ be
the minimum element in $S_{\eta}$. Clearly, $\beta=(0,\beta_2)$ for
some $\beta_2\geq0$. Using the definitions of $\psi$ and
$W_\psi(\xi,\eta_\beta,0)$, we have
$$
\begin{array}{ll} &(d_1(\beta+\varepsilon_2)-\psi(d_1(\beta+\varepsilon_2)))uw\\
\vspace{2mm} \equiv & \sum_{\substack{(\lambda,\mu,0)\in
\Lambda_{N-\varepsilon_2}\\
(\lambda,\mu,0)\prec(\xi,\eta,0)}}[d_1(\beta+\varepsilon_2),x_{\lambda,\mu,0}f_{\lambda,\mu,0}(z)]w
+[d_1(\beta+\varepsilon_2),x_{\xi,\eta,0}f_{\xi,\eta,0}(z)]w\\
\vspace{2mm} \equiv &
x_{1,\xi}[d_1(\beta+\varepsilon_2),x_{2,\eta}]f_{\xi,\eta,0}(z)w\\
\vspace{2mm} \equiv & -(\beta+1)\eta(\beta)\psi(d_1(\varepsilon_2))
x_{1,\xi}x_{2,\eta_\beta}f_{\xi,\eta,0}(z)w\\
\vspace{2mm} \equiv &-(\beta+1)\eta(\beta)\psi(d_1(\varepsilon_2))
x_{\xi,\eta_\beta,0}f_{\xi,\eta,0}(z)w \quad(\textrm{mod}\
W_\psi(\xi,\eta_\beta,0))
\end{array}$$
as required.

For (3), suppose $l=0$ and $\eta=\bar{0}$. We can write
$$uw=\!\!\!\!\!\!\!\!\sum_{|(\lambda,\mu,k)|\leq
N-2\varepsilon_2}\!\!\!\!\!\!\!\!x_{\lambda,\mu,k}f_{\lambda,\mu,k}(z)w
+\!\!\!\!\!\!\!\!\sum_{(\lambda,\bar{0},0)\in
\Lambda_{N-\varepsilon_2}}\!\!\!\!\!\!\!\!x_{1,\lambda}f_{\lambda,\bar{0},0}(z)w
+\!\!\!\!\!\!\sum_{\substack{(\lambda,\bar{0},0)\in
\Lambda_{N}\\
\lambda\prec\lambda'}}\!\!\!\!\!\!x_{1,\lambda}f_{\lambda,\bar{0},0}(z)w+x_{1,\lambda'}f_{\lambda',\bar{0},0}(z)w.$$
Let $\alpha=(0,\alpha_2)$ be the minimum element in $S_{\lambda'}$,
using the definitions of $\psi$ and
$W_\psi(\lambda_\alpha',\bar{0},0))$, we have
$$
\begin{array}{ll} &(d_2(\alpha+\varepsilon_2)-\psi(d_2(\alpha+\varepsilon_2)))uw\\
\vspace{2mm} \equiv & \sum_{\substack{(\lambda,\bar{0},0)\in
\Lambda_{N}\\
\lambda\prec\lambda'}}[d_2(\alpha+\varepsilon_2),x_{1,\lambda}f_{\lambda,\bar{0},0}(z)]w+[d_2(\alpha+\varepsilon_2),
x_{1,\lambda'}f_{\lambda',\bar{0},0}(z)]w\\
\vspace{2mm} \equiv &
[d_2(\alpha+\varepsilon_2),x_{1,\lambda'}]f_{\lambda',\bar{0},0}(z)w\\
\vspace{2mm} \equiv &
-\alpha_2\lambda'(\alpha)\psi(d_2(\varepsilon_2))
x_{1,\lambda_\alpha'}f_{\lambda',\bar{0},0}(z)w\\
\vspace{2mm} \equiv
&-\alpha_2\lambda'(\alpha)\psi(d_2(\varepsilon_2))
x_{\lambda_\alpha',\bar{0},0}f_{\lambda',\bar{0},0}(z)w
\quad(\textrm{mod}\ W_\psi(\lambda_\alpha',\bar{0},0))
\end{array}$$
as required.
\end{proof}

\vspace{3mm} {\bf{Proof of theorem 3.2}} \quad Suppose
$u=f(z)\in \mathbb{C}[z]$. For
$\alpha=(\alpha_1,\alpha_2)\in\mathbb{Z}^2_+$ and $i=1,2$, we have
 $[d_i(\alpha), z]=-\alpha_1 d_i(\alpha)$.
If $\alpha_1 >0$, by Proposition 2.1, we see that  $\psi(d_i(\alpha))=0$. Thus we have
$[d_i(\alpha),z]w=0$, that is $d_i(\alpha)zw=zd_i(\alpha)w$, which
follows that $d_i(\alpha)f(z)w=f(z)d_i(\alpha)w=\psi(d_i(\alpha))f(z)w$.
Therefore,
 $uw$ is a Whittaker vector of type $\psi$.

Suppose $w'=uw$ is a whittaker vector of  $W_{\psi}$. Using lemma
3.6, $w'$ is a whittaker vector of type $\psi$. Following from the
PBW-Theorem, we can write
$$uw=\sum_{(\lambda,\mu,k)\in \widetilde{\mathcal{P}}}
x_{\lambda,\mu,k}f_{\lambda,\mu,k}(z)w,\ \textrm{where }
f_{\lambda,\mu,k}(z)\in \mathbb{C}[z].$$ Let
$(\lambda',\mu',k')=\deg(uw)$. If $u\not\in \mathbb{C}[z]$, then
$(\lambda',\mu',k')\neq (\bar{0},\bar{0},0)$. Following from lemma
3.7--3.11, there is $D\in\mathcal{L}^+$ such that $(D-\psi(D))w'\neq
0$, which implies $w'$ is  not  a whittaker vector of  type $\psi$, a
contradition. Therefore, $u\in \mathbb{C}[z]$. This completes the proof of Theorem 3.2.

\noindent{\section{Simple  Whittaker modules of nonsingular type}}

Recall that a  module $V$ of a Lie algebra $\mathfrak{g}$ is said to be simple (or irreducible) if $V$ is nonzero and has
no nontrivial submodules. In this section, we study the submodule of
universal Whittaker module for $\mathcal{L}$ of nonsingular type. We
assume $\psi:\mathcal{L}^{+}\rightarrow\mathbb{C}$ is a nonsingular
homomorphism of Lie algebras,  $W_{\psi}$ is the universal Whittaker
module for $\mathcal{L}$ of type  $\psi$ with cyclic element $w$.

\begin{lemm}
Any nonzero submodule of    $W_{\psi}$ has a Whittaker vector.
\end{lemm}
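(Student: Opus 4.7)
In view of Theorem 3.2, the Whittaker vectors of $W_\psi$ are exactly the nonzero elements of $\mathbb{C}[z]w$, i.e., the vectors of degree $(\bar{0},\bar{0},0)$. Hence it suffices to find a nonzero element of degree $(\bar{0},\bar{0},0)$ inside any given nonzero submodule $V\subseteq W_\psi$. The strategy is a descent on $\deg(\cdot)$ in the totally ordered set $(\widetilde{\mathcal{P}},\preceq)$ driven by the reduction machinery developed in Lemmas 3.7--3.11.

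Starting from any $0\neq v\in V$, set $(\lambda',\mu',k')=\deg(v)$. Either $(\lambda',\mu',k')=(\bar{0},\bar{0},0)$, in which case $v\in\mathbb{C}[z]w$ is itself the desired Whittaker vector, or exactly one of the following mutually exclusive and exhaustive cases holds: (i) $k'>0$; (ii) $k'=0$, $\mu'\neq\bar{0}$; (iii) $\mu'=\bar{0}$, $k'=0$, $S_{\lambda'}^+\neq\emptyset$; (iv) $\mu'=\bar{0}$, $k'=0$, $S_{\lambda'}^+=\emptyset$, $S_{\lambda'}\neq\emptyset$, $\Lambda_{N-\varepsilon_2}=\emptyset$; (v) $\mu'=\bar{0}$, $k'=0$, $S_{\lambda'}^+=\emptyset$, $S_{\lambda'}\neq\emptyset$, $\Lambda_{N-\varepsilon_2}\neq\emptyset$. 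In each case, the corresponding Lemma 3.7, 3.8, 3.9, 3.10 or 3.11 supplies a specific $D\in\mathcal{L}^+$ such that $(D-\psi(D))v$ has a nonvanishing leading term whose degree is strictly smaller than $(\lambda',\mu',k')$ in $\preceq$. Nonvanishing of that leading coefficient is precisely where the nonsingularity of $\psi$ enters: the relevant coefficients involve the factors $\psi(d_1(\varepsilon_2))$, $\psi(d_2(\varepsilon_2))$, $\psi(d_2(2\varepsilon_2))$, each of which is nonzero by assumption. Setting $v_1:=(D-\psi(D))v\in V$ and iterating yields a sequence $v_0=v,v_1,v_2,\ldots$ of nonzero elements of $V$ with $\deg(v_0)\succ\deg(v_1)\succ\cdots$.

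The main technical obstacle is to prove termination of this descending chain. The plan is to track, case by case, what each lemma does to the degree: Lemma 3.7 strictly decreases $k'$, so after at most $k'$ applications we reach $k'=0$; Lemma 3.8 strips one part off $\mu'$; Lemmas 3.9 and 3.10 strip one part off $\lambda'$; Lemma 3.11 replaces the leading datum by one drawn from the strictly smaller stratum $\Lambda_{N-\varepsilon_2}$ of the current support. Combined with the finiteness of the PBW expansion of $v_0$---which restricts the parts that can appear as leading data of any $v_i$ to a bounded, finitely-generated $\mathbb{N}$-submonoid of $\mathbb{Z}^2_+$---the lex order on the sequence $|\lambda_i|+|\mu_i|\in\mathbb{Z}^2$ is well-founded and the chain must terminate at $(\bar{0},\bar{0},0)$. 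This last bookkeeping step, verifying that the descending chain of degrees cannot continue indefinitely, is the most delicate part; the rest is a direct reading of the leading terms in Lemmas 3.7--3.11.
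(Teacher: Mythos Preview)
Your approach is the same as the paper's: descend on $\deg(\cdot)$ by repeatedly applying $(D-\psi(D))$ with $D$ chosen according to Lemmas~3.7--3.11, using nonsingularity of $\psi$ to guarantee that the new leading coefficient is nonzero.

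The only place your argument is not yet rigorous is exactly the termination step you flag as delicate. Your assertion that ``the lex order on the sequence $|\lambda_i|+|\mu_i|\in\mathbb{Z}^2$ is well-founded'' is false as stated: for instance $(1,0)>(1,-1)>(1,-2)>\cdots$ is an infinite descending chain in the lex order on $\mathbb{Z}^2_+\cup\{\mathbf{0}\}$. The appeal to a ``bounded, finitely-generated $\mathbb{N}$-submonoid'' does not repair this, since the operators $D-\psi(D)$ used in Lemmas~3.8--3.11 introduce new parts into the PBW expansion and there is no a~priori bound inherited from $v_0$ alone. The paper closes this gap by organizing the descent into an explicit loop. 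First run cases (i), (ii), (iii) in that order, each for as long as it applies; these are individually finite because they strictly decrease $k'$, $l(\mu')$, and $\sum_{\beta\in S_{\lambda'}^+}\lambda'(\beta)$ respectively, while leaving the earlier components of the degree fixed. After this inner pass one has $k'=0$, $\mu'=\bar{0}$, $S_{\lambda'}^+=\emptyset$, hence $N=|\lambda'|=(0,n)$ for some $n\in\mathbb{N}$. A single application of Lemma~3.10 or~3.11 then produces a nonzero element whose degree $(\eta,\zeta,s)$ satisfies $|\eta|+|\zeta|=(0,m)$ with $0\le m<n$; since the first coordinate of $N$ stays $0$ thereafter, one restarts the loop and inducts on the single nonnegative integer $n$. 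Replacing your well-foundedness sketch with this explicit integer descent makes the argument complete.
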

\begin{proof} Suppose $M$ is a nonzero
submodule of $W_{\psi}$. Let $w'=uw$ be a nonzero element in $M$,
where $u\in U(\mathfrak{b}^{-})$. If $u\in \mathbb{C}[z]$, then by
theorem 3.2, $w'$ is a Whittaker vector as required. If $u\not\in
\mathbb{C}[z]$, then $\deg(w')\neq (\bar{0},\bar{0},0)$. Let
$(\lambda',\mu',k')=\deg(w')$. We can get a Whittaker vector in $M$
from $w'$ through the following steps.

\textbf{Step 1.}\quad If $k'>0$, then replace $w'$ by
$(d_2(2\varepsilon_2)-\psi(d_2(2\varepsilon_2)))^{k'}w'$. Following
from lemma 3.7, we have $w'\neq 0$ and $\deg(w')=(\lambda',\mu',0)$.
Then go to step 2.

\textbf{Step 2.}\quad If $\mu'\neq \bar{0}$ then $l(\mu')>0$. Let
$\alpha=(\alpha_1,\alpha_2)$   be the minimum element in $S_{\mu'}$.
If $\alpha_1>0$ then replace $w'$ by
$(d_1(\alpha+2\varepsilon_2)-\psi(d_1(\alpha+2\varepsilon_2)))w'$,
else then  replace $w'$ by
$(d_2(\alpha+2\varepsilon_2)-\psi(d_2(\alpha+2\varepsilon_2)))w'$.
Using lemma 3.8, we have $w'\neq 0$ and
$\deg(w')=(\lambda',\mu_\alpha',0)$. Note that
$l(\mu_\alpha')=l(\mu')-1$. If $\mu_\alpha'\neq \bar{0}$, repeat
what we have done as above, until we  get $\deg(w')=(\lambda',0,0)$.
Then go to step 3.

\textbf{Step 3.}\quad If $S_{\lambda'}^+\neq \emptyset$, then
replace $w'$ by
$(d_2(\alpha+2\varepsilon_2)-\psi(d_2(\alpha+2\varepsilon_2))) w'$,
where
  $\alpha$ is the minimum element in
  $S_{\lambda'}^+$. Using lemma 3.9, we have  $w'\neq 0$ and $\deg(w')=(\lambda_\alpha',\bar{0},0)$.
Note that  $\sum_{\beta\in
S_{\lambda_\alpha'}^+}\lambda'(\beta)=\sum_{\beta\in
S_{\lambda'}^+}\lambda'(\beta)$-1. If $S_{\lambda_\alpha'}^+\neq
\emptyset$, repeat what we have done as above, until we  get
$\deg(w')=(\xi,0,0)$ with $S_{\xi}^+=\emptyset$. Then go to step 4.

\textbf{Step 4.}\quad If $\xi\neq \bar{0}$, then there is a positive
integer $n$ such that $|\xi|=(0,n)$. Using lemma 3.10 and 3.11, we
can get $w''\in M$ such that $w''\neq 0$ and
$\deg(w'')=(\eta,\zeta,s)$, where $|(\eta,\zeta,s)|=(0,m)$ with
$0\leq m<n$. Replace $w'$ by $w''$. If
$(\eta,\zeta,s)=(\bar{0},\bar{0},0)$, then $w'$ is a Whittaker
vector in $M$ as required; else, then replace $(\lambda',\mu',k')$
by $(\eta,\zeta,s)$ and return to step 1.
\end{proof}

Let $I$ be an ideal of $\mathbb{C}[z]$, and $M(I)$ be a submodule of
$W_{\psi}$ which is generated by $Iw$. Since $\mathbb{C}[z]$ is a
principle ideal domain, there is  $g(z)\in I$ such that
$I=\mathbb{C}[z]g(z)$. Thus we have $M(I)=U(\mathfrak{b}^{-})g(z)w$.
In the following theorem, it is proved that any submodule of  $W_{\psi}$
is of this form.

\begin{theo} Let $M$ be a submodule of  $W_{\psi}$, then there exist  $g(z)\in \mathbb{C}[z]$
 such that  $M=U(\mathfrak{b}^{-})g(z)w$. Therefore, there is a bijection between the set of submodules of  $W_{\psi}$  and
the set of ideals of $\mathbb{C}[z]$. Moreover,  there is  a
bijection between the set of maximum submodules of $W_{\psi}$ and
the set of maximum ideals of $\mathbb{C}[z]$.
\end{theo}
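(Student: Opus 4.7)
The plan is to set up a correspondence between submodules of $W_{\psi}$ and ideals of $\mathbb{C}[z]$ using the Whittaker vectors they contain. For a submodule $M \subseteq W_{\psi}$, I would consider the set
\[ I_M := \{f(z) \in \mathbb{C}[z] \mid f(z) w \in M\}. \]
First I would verify that $I_M$ is an ideal of $\mathbb{C}[z]$: additivity is obvious, and since $z = d_1(\mathbf{0}) \in \mathfrak{h} \subseteq U(\mathfrak{b}^-)$ the module $M$ is closed under multiplication by $z$, hence by all of $\mathbb{C}[z]$. By Lemma~4.1 the nonzero submodule $M$ contains a Whittaker vector, which by Theorem~3.2 must be of the form $f(z)w$ with $f \neq 0$, so $I_M \neq 0$. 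Since $\mathbb{C}[z]$ is a principal ideal domain, one can write $I_M = (g(z))$; the submodule $M' := U(\mathfrak{b}^-) g(z) w$ is then plainly contained in $M$.

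The substantive task is to establish the reverse inclusion $M \subseteq M'$. I plan to argue by induction on the number of nonzero PBW terms $|P(v)|$ in $v \in M$. The base case $|P(v)| = 0$ is trivial. For the inductive step, let $(\lambda', \mu', k') = \deg(v)$ and $f'(z) = f_{\lambda', \mu', k'}(z)$. The pivotal claim is that $f'(z) \in I_M$; granting this, writing $f'(z) = h(z) g(z)$ and noting $x_{\lambda', \mu', k'} h(z) \in U(\mathfrak{b}^-)$, the element $v'' := x_{\lambda', \mu', k'} h(z) g(z) w$ lies in $M'$, so $v - v''$ lies in $M$ with strictly fewer PBW terms (the top PBW term has cancelled), and the inductive hypothesis yields $v - v'' \in M'$, hence $v \in M'$.

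The main obstacle is the pivotal claim $f'(z) \in I_M$. My plan is to apply the reduction procedure of Lemma~4.1 directly to $v$: at each stage, Lemmas~3.7--3.11 supply an operator $(D - \psi(D))$ with $D \in \mathcal{L}^+$ which, by Lemma~3.5 and the nonsingularity of $\psi$, strictly lowers the degree and multiplies the leading polynomial by a nonzero scalar. After finitely many steps the procedure terminates at a Whittaker vector $f^*(z)w \in M$, yielding $f^*(z) \in I_M$. The delicate issue is to verify that the contributions from lower-order PBW terms of $v$ do not interfere with this leading-polynomial trajectory; a careful reading of the explicit congruences (modulo the filtering subspaces $W_{\psi}(\cdot,\cdot,\cdot)$) in Lemmas~3.7--3.11 controls exactly where each term can land after each reduction, ensuring that the terminal polynomial equals $c \cdot f'(z)$ for some nonzero scalar $c$; hence $f'(z) \in I_M$.

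To conclude, the assignment $M \mapsto I_M$ has inverse $I \mapsto U(\mathfrak{b}^-) g_I(z) w$ (where $(g_I) = I$); that these maps are mutually inverse follows from the main inclusion just established and from a direct PBW computation showing $f(z) w \in U(\mathfrak{b}^-) g(z) w$ if and only if $f(z) \in (g(z))$. Since both assignments respect inclusions, the bijection restricts to a bijection between maximal submodules of $W_{\psi}$ and maximal ideals of $\mathbb{C}[z]$.
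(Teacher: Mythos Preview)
Your overall architecture --- defining $I_M=\{f(z)\in\mathbb{C}[z]\mid f(z)w\in M\}$, showing it is a nonzero ideal via Lemma~4.1 and Theorem~3.2, writing $I_M=(g(z))$, and then arguing $M=U(\mathfrak{b}^{-})g(z)w$ --- matches the paper exactly. The divergence is in how you establish the inclusion $M\subseteq U(\mathfrak{b}^{-})g(z)w$, and your route has a genuine gap.

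Your ``pivotal claim'' is that the leading polynomial $f'(z)=f_{\lambda',\mu',k'}(z)$ of any $v\in M$ lies in $I_M$, and you justify it by asserting that each step of the reduction algorithm of Lemma~4.1 multiplies the leading polynomial by a nonzero scalar. This is \emph{not} what Lemmas~3.7--3.11 say in every case. Look at Lemma~3.11, cases~(1) and~(2): there $\deg(uw)=(\lambda',\bar{0},0)$ with $S_{\lambda'}^{+}=\emptyset$, and the operator applied is $d_1(a\varepsilon_2)-\psi(d_1(a\varepsilon_2))$. Because every factor in $x_{1,\lambda}$ for $(\lambda,\bar0,0)\in\Lambda_N$ has first coordinate zero, one has $[d_1(a\varepsilon_2),x_{1,\lambda}]=0$, so the entire top layer $\Lambda_N$ is annihilated. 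The new leading term carries the polynomial $f_{\xi,\eta,l}(z)$ coming from the \emph{secondary} layer $\Lambda_{N-\varepsilon_2}$, not $f_{\lambda',\bar{0},0}(z)$. Hence the terminal Whittaker vector produced by the algorithm need not be a scalar multiple of $f'(z)w$, and your induction on $|P(v)|$ does not go through as written.

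The paper sidesteps this by a different reduction. Given $v\in M\setminus U(\mathfrak{b}^{-})g(z)w$, it first replaces each coefficient $f_{\lambda,\mu,k}(z)$ by its remainder modulo $g(z)$ (subtracting an element of $U(\mathfrak{b}^{-})g(z)w$), obtaining $v'\neq0$ with all coefficients of $z$-degree strictly less than $\deg g$. One then runs the algorithm of Lemma~4.1 on $v'$. The key observation is that every operator $D-\psi(D)$ with $D\in\mathcal{L}^{+}$ preserves the property ``all PBW coefficients have $z$-degree $<\deg g$'' (commuting $D$ or any $d_i(\beta)$ past a polynomial in $z$ only shifts the variable, never raising the degree). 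Thus, regardless of which branch of Lemmas~3.7--3.11 is used, the Whittaker vector eventually produced is $h(z)w$ with $0\neq h$ and $\deg h<\deg g$, contradicting $I_M=(g(z))$. If you want to salvage your inductive scheme, the cleanest fix is to adopt this degree-reduction step rather than try to track the leading polynomial through the algorithm.
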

\begin{proof}
Let $M(\neq\{0\})$ be a
submodule of $W_{\psi}$. Set  $\mathcal{I}_M=\{ f(z)\in
\mathbb{C}[z]\ | \ f(z)w\in M\}$.  Using lemma 4.1 and theorem 3.2,
we get $\mathcal{I}_M\neq\{0\}$. Clearly, $\mathcal{I}_M$ is an
ideal of $\mathbb{C}[z]$, thus $\mathcal{I}_M=\mathbb{C}[z]g(z)$ for
some $0\neq g(z)\in \mathbb{C}[z]$. We claim
$M=U(\mathfrak{b}^{-})g(z)w$. In fact, $M\supseteq
U(\mathfrak{b}^{-})g(z)w$ is obvious. Suppose $M\supsetneqq
U(\mathfrak{b}^{-})g(z)w$, then there is
 $0\neq v\in M$ such that $v\not\in U(\mathfrak{b}^{-})g(z)w$. 于是可设
$$v=\sum_{(\lambda,\mu,k)\in \widetilde{\mathcal{P}}}
x_{\lambda,\mu,k}f_{\lambda,\mu,k}(z)w,$$ where $
f_{\lambda,\mu,k}(z)\in \mathbb{C}[z]$  and
$\deg{f_{\lambda,\mu,k}(z)}<\deg{g(z)}$ if $f_{\lambda,\mu,k}(z)\neq
0$. Following from the proof of lemma 3.7--3.11, we can get $
h(z)w\in U(\mathfrak{b}^{-})v+U(\mathfrak{b}^{-})g(z)w \subseteq M$
such that $h(z)\neq 0$ and $\deg{h(z)}<\deg{g(z)}$. Thus $h(z)\in
\mathcal{I}_M$, a contradiction to
$\mathcal{I}_M=\mathbb{C}[z]g(z)$. This follows
$M=U(\mathfrak{b}^{-})g(z)w$.

Therefore, there is a bijection between  the set of submodules of
$W_{\psi}$ and the set of ideals of $\mathbb{C}[z]$. Moreover, there
is  a bijection between the set of maximum submodules of $W_{\psi}$
and the set of maximum ideals of $\mathbb{C}[z]$.
\end{proof}

\begin{coro} For  $a\in \mathbb{C}$,
   $L_{\psi,a}:=W_{\psi}/(U(\mathfrak{b}^{-})(z-a)w)$ is a simple Whittaker $\mathcal{L}$-module of type $\psi$,
  and any simple Whittaker $\mathcal{L}$-module of  type
$\psi$ is isomorphic to some $L_{\psi,a}$.
\end{coro}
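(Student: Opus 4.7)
The corollary is essentially a dictionary translation of Theorem 4.2 via the classical fact that the maximal ideals of $\mathbb{C}[z]$ are precisely the principal ideals $(z-a)$ for $a\in\mathbb{C}$. My plan is to first check that each $L_{\psi,a}$ is a simple Whittaker module of type $\psi$, and then to show any simple Whittaker module of type $\psi$ arises this way, by exploiting the universality of $W_{\psi}$.

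\textbf{Step 1: Simplicity of $L_{\psi,a}$.} By Theorem 4.2, the submodules of $W_{\psi}$ are in bijection with the ideals of $\mathbb{C}[z]$, and maximal submodules correspond to maximal ideals. Since $\mathbb{C}[z]$ is a principal ideal domain, its maximal ideals are exactly $(z-a)$, $a\in\mathbb{C}$. Hence $U(\mathfrak{b}^{-})(z-a)w$ is a maximal (in particular proper) submodule of $W_{\psi}$, so the quotient $L_{\psi,a}$ is nonzero and simple. The image of $w$ in $L_{\psi,a}$ is a cyclic Whittaker vector of type $\psi$ since $w$ is one in $W_{\psi}$ and $\psi$ is preserved under quotients, so $L_{\psi,a}$ is a simple Whittaker $\mathcal{L}$-module of type $\psi$.

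\textbf{Step 2: Every simple Whittaker module of type $\psi$ is some $L_{\psi,a}$.} Let $V$ be a simple Whittaker $\mathcal{L}$-module of type $\psi$ with cyclic Whittaker vector $v$. By the universal property of $W_{\psi}$ (Definition 2.3 and the construction $W_{\psi}=U(\mathcal{L})\otimes_{U(\mathcal{L}^{+})}\mathbb{C}_{\psi}$), there is a surjective $\mathcal{L}$-module homomorphism $\pi:W_{\psi}\rightarrow V$ sending $w$ to $v$. Let $K=\ker\pi$. Since $V$ is simple, $K$ is a maximal submodule of $W_{\psi}$, so by Theorem 4.2 there exists $a\in\mathbb{C}$ such that $K=U(\mathfrak{b}^{-})(z-a)w$. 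Consequently $V\cong W_{\psi}/K = L_{\psi,a}$, which completes the proof.

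There is no real obstacle here: the two bijections from Theorem 4.2 (submodules $\leftrightarrow$ ideals, maximal submodules $\leftrightarrow$ maximal ideals) do all the work, and the only additional input is the elementary identification of the maximal ideals of $\mathbb{C}[z]$. The proof is short enough that it can be presented in a few lines.
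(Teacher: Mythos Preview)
Your proposal is correct and follows essentially the same approach as the paper's proof: both invoke the classification of maximal ideals of $\mathbb{C}[z]$ as $(z-a)$, apply the bijection of Theorem 4.2 to conclude $U(\mathfrak{b}^{-})(z-a)w$ is maximal, and then use the universality of $W_{\psi}$ to realize any simple Whittaker module of type $\psi$ as a quotient by such a maximal submodule. Your write-up is slightly more explicit about the cyclic Whittaker vector surviving in the quotient and about naming the kernel, but the argument is the same.
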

\begin{proof}
We know that for  $a\in
\mathbb{C}$, $\mathbb{C}[z](z-a)$ is a maximum ideal of
$\mathbb{C}[z]$, and any maximum ideal of $\mathbb{C}[z]$ is of this
form. Using theorem 4.2, $U(\mathfrak{b}^{-})(z-a)w$ is a maximum
submodule of $W_{\psi}$, which implies
$L_{\psi,a}:=W_{\psi}/(U(\mathfrak{b}^{-})(z-a)w)$ is a simple
Whittaker $\mathcal{L}$-module of type $\psi$. Since any Whittaker
$\mathcal{L}$-module of type $\psi$ is isomorphic to a quotient of
 $W_{\psi}$, using theorem 4.2, it must be isomorphic to some $L_{\psi,a}$.
\end{proof}

\noindent {\bf Acknowledgments.} The authors would like to thank Prof. S. Tan  for  stimulating discussions and help in preparation of this paper.

\vskip 5mm

\end{document}